\documentclass[final,12pt,a4paper]{amsart} 

%========================================================================
%Use packages
%========================================================================

\usepackage{graphicx}
\usepackage[all]{xy}
\usepackage{placeins}
\usepackage{enumitem}
\usepackage{amssymb}
\usepackage{latexsym}
\usepackage{amsmath}
\usepackage{mathrsfs}
\usepackage{array,booktabs}
\usepackage{verbatim}
\usepackage{fullpage}
\usepackage[notref, notcite]{showkeys}  % for draft versions only
\usepackage{color}
\usepackage[normalem]{ulem}

\usepackage{hyperref}
\newcommand{\harxiv}[1]{ \href{http://arxiv.org/abs/#1}{\texttt{arXiv:#1}}}
\newcommand{\hyref}[2]{ \hyperref[#2]{#1~\ref*{#2}} } 

%=============================================================================
% Commands built by hand
%=============================================================================

%=============================================================================
% Theorem environments
%=============================================================================

\theoremstyle{plain}
\newtheorem{theorem}{Theorem}

\newtheorem{lemma}[theorem]{Lemma}

\newtheorem{proposition}[theorem]{Proposition}
\newtheorem{introtheorem}{Theorem}
\newtheorem{introcorollary}[introtheorem]{Corollary}

\theoremstyle{definition}
\newtheorem{remark}[theorem]{Remark}

\newtheorem*{definition}{Definition}

%===================================================================
% Sans serif letters upper case 
%===================================================================

%Font for small sans serif letters

\newcommand{\sC}{\mathsf{C}}
\newcommand{\sD}{\mathsf{D}}
\newcommand{\sH}{\mathsf{H}}
\newcommand{\sK}{\mathsf{K}}
\newcommand{\sM}{\mathsf{M}}
\newcommand{\sN}{\mathsf{N}}
\newcommand{\sR}{\mathsf{R}}
\newcommand{\sS}{\mathsf{S}}
\newcommand{\sT}{\mathsf{T}}
\newcommand{\sU}{\mathsf{U}}
\newcommand{\sV}{\mathsf{V}}
\newcommand{\sX}{\mathsf{X}}
\newcommand{\sY}{\mathsf{Y}}

\DeclareMathAlphabet{\mathpzc}{OT1}{pzc}{m}{it}

%==============================================================================
%     Blackboard bold letters upper case 
%==============================================================================

\newcommand{\bC}{\mathbb{C}}
\newcommand{\bH}{\mathbb{H}}
\newcommand{\bN}{\mathbb{N}}
\newcommand{\bP}{\mathbb{P}}
\newcommand{\bZ}{\mathbb{Z}}

%==============================================================================
%   \mathcal upper case letters
%==============================================================================

\newcommand{\mcF}{\mathcal{F}}
\newcommand{\mcS}{\mathcal{S}}
\newcommand{\mcT}{\mathcal{T}}

%==============================================================================
% Macros for mathematical symbols
%==============================================================================

\newcommand{\Db}{\sD^b}
\newcommand{\Kb}{\sK^b}
\newcommand{\proj}{\mathsf{proj}}
\renewcommand{\mod}{\mathsf{mod}}
\newcommand{\Mod}{\mathsf{Mod}}
\newcommand{\Fac}{\mathsf{Fac}}
\newcommand{\Stab}{\mathsf{stab}}

\newcommand{\kk}{\mathbf{k}}

\DeclareMathOperator{\Hom}{\mathrm{Hom}}
\DeclareMathOperator{\End}{\mathrm{End}}

\newcommand{\extn}[1]{\langle #1 \rangle}
\DeclareMathOperator{\add}{\mathsf{add}}
\DeclareMathOperator{\thick}{\mathsf{thick}}
\newcommand{\orth}{{}^\perp}

\newcommand{\iffdef}{\stackrel{\text{def}}{\iff}}
\newcommand{\rightlabel}[1]{\stackrel{#1}{\to}}
\newcommand{\longmapsfrom}{\longleftarrow\!\shortmid}
\renewcommand{\longmapsto}{\shortmid\!\longrightarrow}

%==============================================================================
% Greek letters
%==============================================================================

\renewcommand{\phi}{\varphi}
\renewcommand{\epsilon}{\varepsilon}

%==============================================================================
% Start of paper
%==============================================================================

\begin{document}

\title[Silting discrete]{Contractibility of the stability manifold for silting-discrete algebras}

\author{David Pauksztello} 
\address{Dipartimento di Informatica, Universit\`a degli Studi di Verona, Strada le Grazie 15 - Ca' Vignal 2, 37134 Verona, Italy.}
\email{david.pauksztello@univr.it}

\author{Manuel Saor\'in}
\address{Departamento de Matem\'aticas, Universidad de Murcia, Aptdo. 4021, 30100 Espinardo, Murcia, Spain.}
\email{msaorinc@um.es}

\author{Alexandra Zvonareva}
\thanks{Alexandra Zvonareva is supported by the RFBR Grant 16-31-60089. Manuel Saor\'in is supported by research projects from the Ministerio de Econom\'ia y Competitividad of Spain (MTM2016-77445P) and from the Fundaci\'on 'S\'eneca' of Murcia (19880/GERM/15), both with a part of FEDER funds}
\address{Chebyshev Laboratory, St. Petersburg State University, 14th Line 29B, St. Petersburg 199178, Russia.}
\email{alexandrazvonareva@gmail.com}

\keywords{Bounded t-structure, silting-discrete, stability condition}

\subjclass[2010]{18E30, 16G10}
%16G10: Representations of Artinian rings
%18E30: Derived categories, triangulated categories

\begin{abstract}
We show that any bounded t-structure in the bounded derived category of a silting-discrete algebra is algebraic, i.e. has a length heart with finitely many simple objects. As a corollary, we obtain that the space of Bridgeland stability conditions for a silting-discrete algebra is contractible.
\end{abstract}

\maketitle

%===============================================================
% SECTION
\section*{Introduction}
%===============================================================

Stability conditions on triangulated categories were introduced by Bridgeland in \cite{Bridgeland} as a means of extracting geometry from homological algebra with a view to constructing moduli spaces arising in the context of Homological Mirror Symmetry. They can be thought of as a continuous generalisation of bounded t-structures. The main result of \cite{Bridgeland} asserts that the space of stability conditions form a complex manifold, the \emph{stability manifold}. This can be thought of as geometrically encoding most of the cohomology theories on a given triangulated category.

Bounded t-structures admit a mutation theory given by HRS-tilts (see Proposition~\ref{prop:HRS} below), giving rise to a graph that is closely related to the exchange graphs occurring in cluster combinatorics \cite{KQ}, which is the skeleton of the stability manifold in the Dynkin case. Despite being the focus of extensive investigation, for example \cite{Bridgeland,DDC2,DHKK,DK,HKK,Macri,Okada,Qiu,QW}, computations with stability conditions are difficult. For example, it is widely believed that whenever the stability manifold is nonempty it is contractible. However, this has been proved in only few cases, though the list is now growing, see \cite{DDC2,DK,HKK,Macri,Okada,QW}.

Silting objects are a generalisation of tilting objects due to Keller and Vossieck in \cite{KV}. In the context of bounded derived categories of finite-dimensional algebras, silting objects enable the detection of t-structures whose hearts are equivalent to module categories of finite-dimensional algebras \cite{KY}. \emph{Silting-discreteness} \cite{Aihara} is a finiteness condition on a triangulated category that says there are only finitely many silting objects in any interval in the poset of silting objects \cite{AI}; see below for precise definitions. Examples of silting-discrete finite-dimensional algebras include hereditary algebras of finite representation type, derived-discrete algebras \cite{DDC2}, preprojective algebras of Dynkin type \cite{AM}, symmetric algebras of finite representation type \cite{Aihara}, Brauer graph algebras whose Brauer graphs contain at most one cycle of odd length and no cycles of even length \cite{AAC}, and local algebras \cite{AI}.

The purpose of this note is to establish the following characterisation of the bounded t-structures in the bounded derived category $\Db(\Lambda)$ of a silting-discrete finite-dimensional algebra $\Lambda$. We recall that a bounded t-structure is \emph{algebraic} if it is given by a silting object; see Section~\ref{sec:background} for the precise definition.

\begin{introtheorem} \label{thm:main}
If $\Lambda$ is a silting-discrete finite-dimensional $\kk$-algebra, then any bounded t-structure in $\Db(\Lambda)$ is algebraic, i.e. has a length heart.
\end{introtheorem}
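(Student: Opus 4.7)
My plan is to realize any bounded t-structure $\tau$ on $\Db(\Lambda)$ as coming from a silting object, by iteratively mutating the standard t-structure $\tau_\Lambda$ (whose heart is $\mod\Lambda$) toward $\tau$ via HRS-tilts, with silting-discreteness forcing termination.

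First I would reduce to the intermediate case. Since $\tau$ and $\tau_\Lambda$ are both bounded, every object of $\Db(\Lambda)$ has cohomology concentrated in a finite range for either; applied to $\Lambda$ (through $\tau$) and to a finite generating set of $\tau^\heartsuit$ (through $\tau_\Lambda$), this yields integers $m \leq n$ with $\tau_\Lambda[n] \leq \tau \leq \tau_\Lambda[m]$. After a cohomological shift I may assume $\tau_\Lambda[N] \leq \tau \leq \tau_\Lambda$ for some $N \geq 0$.

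Next I would induct on $N$. Silting-discreteness says the silting interval $[\Lambda[N], \Lambda]$ is finite; write $\mcS_N$ for its elements and $\tau_T$ for the algebraic bounded t-structure associated to $T \in \mcS_N$ via Koenig--Yang, so that $\tau_T$ lies in $[\tau_\Lambda[N], \tau_\Lambda]$. Given $\tau$, choose $T \in \mcS_N$ minimal subject to $\tau \leq \tau_T$. If $\tau = \tau_T$ then $\tau$ is algebraic and we are done. Otherwise I would exhibit a simple $S$ of the length heart $\tau_T^\heartsuit$ that witnesses the strict inclusion of aisles $\tau^{\leq 0} \subsetneq \tau_T^{\leq 0}$. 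Using Proposition~\ref{prop:HRS}, the HRS-tilt of $\tau_T$ at the torsion pair generated by $S$ gives a bounded t-structure still dominating $\tau$; by the Aihara--Iyama silting mutation theorem this tilt is algebraic, with silting object the left mutation of $T$ at $S$, lying in $\mcS_{N+1}$. Re-running the argument on the (still finite) interval $\mcS_{N+1}$ contradicts the minimality of $T$ and closes the induction.

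The principal obstacle is the production of such a simple $S \in \tau_T^\heartsuit$ witnessing $\tau \subsetneq \tau_T$. A priori $\tau^\heartsuit$ is not assumed to have any simple objects at all, so there is no symmetry to exploit: all structure must be read off the algebraic side $\tau_T$. The intended argument should combine the fact that the finitely many simples of $\tau_T^\heartsuit$ generate the heart (since it is length) with a direct cohomological calculation showing that if every simple of $\tau_T^\heartsuit$ lay in $\tau^\heartsuit$, then $\tau^{\leq 0}$ would already contain all of $\tau_T^{\leq 0}$, forcing equality. A secondary technicality is bookkeeping during the induction: left mutation enlarges the relevant silting interval by one step, so the inductive scheme must absorb these enlargements, which is precisely what silting-discreteness on arbitrary bounded silting intervals allows.
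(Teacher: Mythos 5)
Your overall strategy---sandwich the given bounded t-structure between shifts of an algebraic one, then descend from an algebraic t-structure dominating it by simple HRS-tilts, with silting-discreteness forcing termination---is a legitimate alternative to the paper's argument, and your first reduction is exactly the paper's Lemma~\ref{lem:sandwich}. (Your bookkeeping worry is in fact unfounded: since $\sX \supseteq \sX_\Lambda[N]$, any silting object $T'$ with $\sX \subseteq \sX_{T'} \subseteq \sX_\Lambda$ automatically satisfies $\Lambda \geq T' \geq \Lambda[N]$, so the mutated object stays in the finite interval $\mcS_N$ and the contradiction with minimality of $T$ already happens there; no enlargement to $\mcS_{N+1}$ is needed.) The genuine gap is precisely the step you flag as the principal obstacle, and the patch you sketch does not fill it. For the simple tilt of $(\sX_T,\sY_T)$ at $S$ to still dominate $(\sX,\sY)$ you need a simple object $S$ of $\sH_T$ lying in the \emph{coaisle} $\sY$: then the new coaisle $\extn{\sY_T,\extn{S}}$ is contained in $\sY$ because $\sY$ is extension closed. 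Your proposed cohomological calculation only shows that if every simple of $\sH_T$ lay in $\sH = \sX\cap\sY[1]$ the two t-structures would coincide; negating this merely produces a simple \emph{not} in $\sH$, which is much weaker than a simple in $\sY$, and you offer no argument that a simple in $\sH_T\cap\sY$ exists. That existence statement is the real content of the theorem on your route, and it is missing.

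For comparison, the paper sidesteps the search for simples entirely. Proposition~\ref{prop:algebraic} shows that \emph{every} t-structure intermediate with respect to an algebraic one is algebraic: by Proposition~\ref{prop:HRS} it corresponds to a torsion pair in $\sH_\sS \simeq \mod(\Gamma)$; silting-discreteness makes $\Gamma$ $\tau$-tilting finite, so every torsion class is functorially finite, hence equals $\Fac(M)$ for a support $\tau$-tilting pair, hence is realised by a two-term silting object whose t-structure is checked to coincide with the given one. Lemma~\ref{lem:intermediate} then reduces the width of the sandwich one step at a time by tilting at the single torsion pair whose torsion-free class is $\mcF = \sH_\sT\cap\sY$, verified directly to be a torsion-free class. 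To salvage your approach you would need to prove that $\sX\subsetneq\sX_T$ forces some simple of $\sH_T$ into $\sY$ (this is essentially what is done in \cite{QW} and \cite{AMY}); alternatively, replace the tilt at one simple by the paper's one-shot tilt at $\mcF=\sH_T\cap\sY$, which requires no such choice but does require the $\tau$-tilting finiteness input to see that the resulting t-structure is algebraic.
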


This result means that the techniques and methods used in \cite{DDC2,QW} to show that the stability manifold of a derived-discrete algebra is contractible can be applied here.

\begin{introcorollary} \label{cor:contractible}
If $\Lambda$ is a silting-discrete finite-dimensional $\kk$-algebra, then the stability manifold $\Stab(\Db(\Lambda))$ is contractible.
\end{introcorollary}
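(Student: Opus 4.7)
The plan is to deduce Corollary B from Theorem A by applying the contractibility framework developed in \cite{QW} and reused in \cite{DDC2}. The crucial point is that Theorem A supplies the one missing hypothesis of that framework for a general silting-discrete algebra, namely that every stability condition on $\Db(\Lambda)$ has an algebraic heart.

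First I would set up the standard cell decomposition of $\Stab(\Db(\Lambda))$. For each silting object $s$, write $\mathcal{A}_s$ for the associated algebraic heart, with simples $S^s_1, \dots, S^s_{n_s}$, and let $U(s) \subset \Stab(\Db(\Lambda))$ denote the locus of stability conditions whose heart is $\mathcal{A}_s$. Such a stability condition is determined by the complex numbers $Z(S^s_i)$, each constrained to lie in the semi-closed upper half-plane $\bH = \{r e^{i\pi\phi} : r > 0, \phi \in (0,1]\}$, giving $U(s) \cong \bH^{n_s}$, which is contractible. By Theorem \ref{thm:main}, the family $\{U(s)\}_{s}$ indexed by silting objects (up to shift) covers $\Stab(\Db(\Lambda))$.

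Next I would analyse how these cells glue. Codimension-one boundary strata between cells correspond to simple HRS-tilts (Proposition \ref{prop:HRS}); for algebraic hearts these are in bijection with irreducible silting mutations via the Koenig--Yang correspondence \cite{KY}. Silting-discreteness of $\Lambda$ guarantees that the silting exchange graph is connected and locally finite, so the closure of each $U(s)$ meets only finitely many other cells. This endows $\Stab(\Db(\Lambda))$ with a CW-structure whose combinatorics are governed by the silting exchange graph, and the $\bC$-action by rotation/shift is compatible with this decomposition.

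Finally I would invoke the deformation retract argument of \cite{DDC2,QW}: fix a silting object $s_0$, and contract $\Stab(\Db(\Lambda))$ onto $U(s_0)$ by successively retracting each cell $U(s)$ onto its intersection with the cells associated to silting objects lying one mutation step closer to $s_0$. The main obstacle is verifying continuity and well-definedness of this retraction at the boundaries between infinitely many cells. Silting-discreteness is precisely what ensures that any two silting objects are linked by a \emph{finite} sequence of irreducible mutations in a given bounded interval of the silting poset, so the retraction of a compact region involves only finitely many cells; in turn, Theorem \ref{thm:main} excludes the possibility of non-algebraic stability conditions arising as limits. Together with the contractibility of each $U(s)$, this gives the contractibility of $\Stab(\Db(\Lambda))$.
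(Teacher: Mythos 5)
Your first step is the same as the paper's: Theorem~\ref{thm:main} guarantees every stability condition has an algebraic heart, and each chamber $\sC_\sH$ of stability conditions with fixed algebraic heart $\sH$ is homeomorphic to $\bH^{n}$. From that point on, however, your argument diverges from the paper's and contains genuine gaps. First, the chambers $\sC_\sH \cong \bH^{n}$ do not directly give a CW structure on $\Stab(\Db(\Lambda))$: $\bH^n$ is neither an open nor a closed cell, and the way $\overline{\sC_\sH}$ meets other chambers is exactly the delicate point. The paper handles this by passing to the \emph{finer} stratification by the pieces $\sC_{\sH,I}$ (indexed by a heart together with the subset $I$ of simples whose image has phase $1$), verifying via \cite[Prop.~3.21]{QW} that this is a regular, totally normal \emph{cellular stratified space} in the sense of \cite{CW-strat} --- a strictly weaker structure than a CW complex, designed precisely for cells of the shape $(D^k)^\circ \subseteq D \subseteq D^k$. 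Your proposal skips this refinement entirely, and the assertion that the chamber decomposition ``endows $\Stab(\Db(\Lambda))$ with a CW-structure'' is not justified (and not literally true).

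Second, the concluding retraction argument does not work as stated. Distinct chambers $\sC_\sH$ are disjoint (a stability condition determines its heart), so ``retracting each cell $U(s)$ onto its intersection with the cells associated to silting objects one mutation step closer to $s_0$'' retracts onto the empty set; you would have to retract onto pieces of closures, and no homotopy is actually constructed. Moreover the silting poset is infinite (all shifts $S[n]$ are silting), so the distance to $s_0$ in the mutation graph is unbounded and your ``successive'' retraction is an infinite process; observing that a compact region meets only finitely many cells does not produce a globally defined, continuous deformation retraction. The paper avoids constructing any retraction of the stability space itself: it applies the Furuse--Mukouyama--Tamaki theorem (Theorem~\ref{thm:CW-strat}) to obtain a homotopy equivalence $\Stab(\Db(\Lambda)) \simeq BP(\Stab(\Db(\Lambda)))$, identifies the poset of strata with the poset $\bP_2(\Kb(\proj(\Lambda)))$ of silting pairs via Lemma~\ref{lem:poset}, and then quotes the contractibility of $B\bP_2(\Kb(\proj(\Lambda)))$ from \cite{DDC2} (Theorem~\ref{thm:contractible}), which is itself a combinatorial statement about the poset, not a geometric retraction. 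To repair your proof you would either need to reproduce this stratified-space machinery or supply a genuinely new, and nontrivial, direct contraction.
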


In forthcoming work \cite{AMY}, T. Adachi, Y. Mizuno and D. Yang independently obtain similar results in the setting of silting-discrete triangulated categories.

The outline of this note is as follows. In Section~\ref{sec:background} we recall the concepts and results that will be necessary to establish Theorem~\ref{thm:main}. In Section~\ref{sec:proof} we prove Theorem~\ref{thm:main}. Once one has Theorem~\ref{thm:main} the proof of Corollary~\ref{cor:contractible} is implicit in \cite{DDC2,QW}. For the convenience of the reader we briefly sketch the narrative of the argument in \cite{DDC2,QW} in Section~\ref{sec:stability}.

\smallskip

\noindent{\bf Convention.} Throughout this note all subcategories will be full and strict, $\kk$ will be a field, and all algebras will be finite-dimensional $\kk$-algebras. Throughout $\sD$ will be a triangulated category and the shift functor will be denoted by $[1] \colon \sD \to \sD$.

%===============================================================
% SECTION
\section{Background} \label{sec:background}
%===============================================================

For a subcategory $\sS$ of a triangulated category $\sD$ we define
\[
\sS[>n] = \{S[i] \mid S \in \sS, i>n \} \text{ and }
\sS[<n] =  \{S[i] \mid S \in \sS, i<n \},
\]
analogously for $\sS[\leq n]$ and $\sS[\geq n]$.
For subcategories $\sX$ and $\sY$ of $\sD$ we define
\[
\sX * \sY = \{ D \in \sD \mid \text{there exists a triangle } X \to D \to Y \to X[1] \text{ with } X \in \sX \text{ and } Y \in \sY \}.
\]
A subcategory $\sX$ is \emph{extension closed} if $\sX = \sX * \sX$. We shall denote the extension closure of $\sX$ by $\extn{\sX}$.
We define the \emph{right} and \emph{left perpendicular categories} of $\sX$ by
\begin{eqnarray*}
\sX\orth & = & \{ D \in \sD \mid \Hom_{\sD}(X,D) = 0 \text{ for all } X \in \sX \}; \\
\orth\sX & = & \{ D \in \sD \mid \Hom_{\sD}(D,X) = 0 \text{ for all } X \in \sX \}.
\end{eqnarray*}
For subcategories of an abelian category $\sH$ we use the same notation for the analogous definitions, using short exact sequences instead of triangles.

\subsection{Torsion pairs and t-structures}

The general notion of a torsion pair on an abelian category goes back to \cite{Dickson}.

\begin{definition}
A torsion pair in an abelian category $\sH$ consists of a pair of full subcategories $(\mcT,\mcF)$ such that
$\mcT\orth = \mcF$, $\mcT = \orth\mcF$, and $\sH = \mcT * \mcF$.
We call $\mcT$ the \emph{torsion class} and $\mcF$ the \emph{torsionfree class} of the torsion pair.
\end{definition}

If the abelian category $\sH$ is $\mod(\Lambda)$ for a finite-dimensional algebra $\Lambda$, then any subcategory $\mcT$ closed under extensions, factor objects and direct summands gives rise to a torsion class of a torsion pair; see, e.g. \cite[Ch. VI]{ASS}. A dual statement holds for \emph{torsionfree classes}. For $M \in \sH$ we write $\Fac(M)$ for the smallest torsion class containing $M$. 

The analogue of a torsion pair in a triangulated category is a t-structure \cite{BBD}.

\begin{definition}
A \emph{t-structure} on a triangulated category $\sD$ consists of a pair of full subcategories $(\sX,\sY)$ such that 
$\sX\orth = \sY$, $\sX = \orth\sY$, $\sD = \sX * \sY$ and $\sX[1] \subseteq \sX$ (equivalently, $\sY[-1] \subseteq \sY$).
The subcategory $\sH = \sX \cap \sY[1]$ is an abelian subcategory of $\sD$ called the \emph{heart} of $(\sX,\sY)$.
A t-structure is called \emph{bounded} if
\[
\sD = \bigcup_{n \in \bZ} \sX[n] = \bigcup_{n \in \bZ} \sY[n].
\]
For a bounded t-structure $(\sX,\sY)$ we have $\sX = \extn{\sH[\geq 0]}$ and $\sY = \extn{\sH[<0]}$.
A t-structure is called \emph{algebraic} if it is bounded and $\sH$ is a length category, i.e. $\sH$ has finitely many isomorphism classes of simple objects and each object of $\sH$ is both artinian and noetherian.
\end{definition}

There is a close connection between torsion pairs and t-structures. 

\begin{proposition}[{\cite{BR,Pol,Woolf}}] \label{prop:HRS}
Suppose $(\sX,\sY)$ is a t-structure on $\sD$ with heart $\sH$. 
Then there is a bijection
\begin{eqnarray*}
\{\text{t-structures } (\sX',\sY') \text{ with } \sX[1] \subseteq \sX' \subseteq \sX \} & \stackrel{1-1}{\longleftrightarrow} & \{ \text{torsion pairs } (\mcT,\mcF) \text{ in } \sH \}; \\
(\sX', \sY')                                                                                                              & \longmapsto                                 & (\mcT = \sH \cap \sX', \mcF = \sH \cap \sY'); \\
(\sX' = \extn{\mcT, \sX[1]}, \sY' = \extn{\sY,\mcF})                                                 & \longmapsfrom                             & (\mcT, \mcF).
\end{eqnarray*}
\end{proposition}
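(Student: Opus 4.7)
The plan is to construct both maps explicitly and verify they are mutually inverse; the essential ingredient is the Happel--Reiten--Smal\o{} tilting construction, the rest being bookkeeping.

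First I would make the preliminary observation that, by taking right-orthogonals, the hypothesis $\sX[1]\subseteq\sX'\subseteq\sX$ translates to $\sY\subseteq\sY'\subseteq\sY[1]$, so the heart $\sH=\sX\cap\sY[1]$ meets both aisles nontrivially. Setting $\mcT:=\sH\cap\sX'$ and $\mcF:=\sH\cap\sY'$ gives the candidate torsion pair; the vanishing $\Hom_\sH(\mcT,\mcF)=0$ is inherited from $\Hom_\sD(\sX',\sY')=0$. For the torsion-pair decomposition of $H\in\sH$, I would apply the $(\sX',\sY')$-truncation triangle $X'\to H\to Y'\to X'[1]$ and use extension-closure of $\sX$ and $\sY[1]$ together with the inclusions above to force both $X'$ and $Y'$ into $\sH$, establishing the forward assignment.

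For the backward map, I would set $\sX':=\extn{\mcT,\sX[1]}$ and $\sY':=\extn{\sY,\mcF}$ and verify the t-structure axioms. Closure $\sX'[1]\subseteq\sX'$ is immediate since $\mcT[1]\subseteq\sH[1]\subseteq\sX[1]$. The Hom-vanishing $\Hom_\sD(\sX',\sY')=0$ reduces by d\'evissage along the defining extension triangles to the four elementary cases $\Hom(\mcT,\sY)$, $\Hom(\mcT,\mcF)$, $\Hom(\sX[1],\sY)$, $\Hom(\sX[1],\mcF)$, each vanishing by the input data. For an arbitrary $D\in\sD$, the truncation triangle is produced in two steps: first apply the $(\sX,\sY)$-truncation to $D$; then apply the torsion decomposition to the $\sH$-cohomology of $D$; and splice the pieces via the octahedral axiom.

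To conclude, I would verify that the two operations are mutually inverse. Starting from a torsion pair $(\mcT,\mcF)$, the equalities $\extn{\mcT,\sX[1]}\cap\sH=\mcT$ and $\extn{\sY,\mcF}\cap\sH=\mcF$ follow from the extension-closure of $\mcT$ and $\mcF$ in $\sH$. Conversely, for $X'\in\sX'$, the $(\sX[1],\sY[1])$-truncation yields a triangle $A\to X'\to H\to A[1]$ with $A\in\sX[1]$ and $H\in\sH$; then $H\in\sX'\cap\sH=\mcT$ by extension-closure of $\sX'$, exhibiting $X'\in\extn{\mcT,\sX[1]}$. The main obstacle, as usual for HRS-type tilts, is constructing the truncation triangle in the backward direction: stitching a t-structure truncation to a torsion-pair truncation via the octahedral axiom and tracking where each piece lands is where the substance of the proof sits; everything else is formal manipulation of orthogonality and extension closure.
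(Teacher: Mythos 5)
The paper does not prove this proposition; it is quoted from \cite{BR,Pol,Woolf}, so there is no in-paper argument to compare against. Your outline is the standard HRS-tilting proof and is essentially correct and complete: the translation $\sX[1]\subseteq\sX'\subseteq\sX\iff\sY\subseteq\sY'\subseteq\sY[1]$, forcing the $(\sX',\sY')$-truncation pieces of $H\in\sH$ into $\sH$ by extension-closure of $\sX$ and $\sY[1]$, the four-case d\'evissage for Hom-vanishing, and the octahedral splicing of the $(\sX,\sY)$-truncation with the torsion decomposition of the degree-zero cohomology are exactly the right steps. One small imprecision: the equality $\extn{\mcT,\sX[1]}\cap\sH=\mcT$ does not follow merely from extension-closure of $\mcT$ in $\sH$ (an object of $\extn{\mcT,\sX[1]}$ lying in $\sH$ need not be assembled from pieces in $\sH$); the correct one-line argument is that $H\in\sH\cap\sX'$ gives $\Hom_\sD(H,\mcF)\subseteq\Hom_\sD(\sX',\sY')=0$, whence $H\in\orth\mcF\cap\sH=\mcT$. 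With that fix the two composites are indeed the identity and the proof goes through.
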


The t-structure $(\sX',\sY')$ in Proposition~\ref{prop:HRS} is called an \emph{HRS-tilt} of $(\sX,\sY)$ at the torsion pair $(\mcT,\mcF)$ and is called \emph{intermediate with respect to $(\sX,\sY)$}; see \cite[Prop. I.2.1]{HRS}. Note that $\sX'=\sX[1] * \mcT$ and $Y'=\mcF * \sY$. 

\subsection{Silting, t-structures and $\tau$-tilting}

Silting was first introduced in \cite{KV}; however, we follow the treatment of \cite{AI}.

\begin{definition}
A subcategory $\sS$ of $\sD$ is \emph{silting} if $\thick(\sS) = \sD$ and $\Hom_{\sD}(S,S'[i]) = 0$ for each $S,S' \in \sS$ and $i >0$, where $\thick(\sS)$ is the smallest triangulated subcategory of $\sD$ containing $\sS$ that is closed under direct summands.
An object $S$ of $\sD$ is a \emph{silting object} if $\add(S)$ is a silting subcategory.
\end{definition}

For a finite-dimensional algebra $\Lambda$ we shall freely abuse notation and identify silting subcategories with silting objects, since any silting subcategory in $\Kb(\proj(\Lambda))$ is of the form $\add(S)$, for some silting object uniquely determined up to additive closure.

There is a partial order on silting subcategories \cite{AI}: for silting subcategories $\sS$ and $\sT$,
\[
\sS \geq \sT \iffdef \Hom_{\sD}(S,T[i]) = 0 \text{ for all } S \in \sS, T \in \sT \text{ and } i >0 \iff \sT \subseteq (\sS[<0])\orth .
\]
A silting subcategory $\sT$ is called \emph{two term with respect to $\sS$} if $\sS \geq \sT \geq \sS[1]$, which happens if and only if $\sT \in \sS * \sS[1]$; see, for example, \cite{IJY}.

\begin{definition}[{\cite[Def. 3.6 \& Prop. 3.8]{Aihara}}]
A finite-dimensional algebra $\Lambda$ is \emph{silting-discrete} if for any silting object $S$ and any natural number $n$ there are only finitely many silting objects $T$ such that $S \geq T \geq S[n]$. Note that, via \cite[Lem. 2.14]{QW}, this is equivalent to there being only finitely many silting objects $T$ such that $S \geq T \geq S[1]$.
\end{definition}

In the case that $\sD = \Db(\Lambda)$ for a finite-dimensional algebra $\Lambda$, there is a correspondence between silting subcategories and algebraic t-structures.

\begin{theorem}[{\cite{KY} \& \cite{IJY}}] \label{thm:koenig-yang}
Let $\Lambda$ be a finite-dimensional $\kk$-algebra. Then there is a bijection
\begin{eqnarray*}
\{ \text{silting subcategories of } \Kb(\proj(\Lambda)) \} & \stackrel{1-1}{\longleftrightarrow} & \{ \text{algebraic t-structures on } \Db(\Lambda) \}; \\
\sS                                                                                 & \longmapsto                                  & \big( \sX_\sS = (\sS[<0])\orth , \sY_\sS = (\sS[\geq 0])\orth \big). 
\end{eqnarray*}
Moreover, this restricts to a bijection with intermediate algebraic t-structures,
\[
\{ \text{silting subcategories } \sT \subseteq \sS * \sS[1] \}  \stackrel{1-1}{\longleftrightarrow}  \{ \text{algebraic t-structures } (\sX,\sY) \text{ with } \sX_\sS [1] \subseteq \sX \subseteq \sX_\sS \}.  
\]
\end{theorem}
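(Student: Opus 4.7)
My plan is to build the bijection in both directions and verify they are mutually inverse, with the intermediate version following once the global version is established.

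For the forward direction, starting from a silting subcategory $\sS$, the first step is to verify that the pair $(\sX_\sS, \sY_\sS) = ((\sS[<0])\orth, (\sS[\geq 0])\orth)$ is a bounded t-structure. The Hom-vanishing condition in the definition of silting shows that $\sS \subseteq \sX_\sS$. Since $\thick(\sS) = \sD$, every object of $\sD$ can be built from $\sS$ by finitely many triangles and summand decompositions; an inductive argument on this construction, using that $\sS$ satisfies Hom-vanishing against its own positive shifts, produces, for each $D \in \sD$, a truncation triangle with first term in $\sX_\sS$ and third term in $\sY_\sS$. Shift-stability and perpendicularity follow directly from the definitions. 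Boundedness is automatic from the filtration given by $\thick(\sS) = \sD$. The next step is to identify the heart $\sH_\sS = \sX_\sS \cap \sY_\sS[1]$ as a length category: I would show that each indecomposable summand $S_i$ of a silting object $S$ contributes one simple object $X_i$ to $\sH_\sS$, by producing $X_i$ as the cone of a minimal left $\add(S/S_i)$-approximation (or dually), and then checking via the approximation triangles that the $X_i$ generate $\sH_\sS$ by extensions, giving it the required length structure with exactly $|\sS|$ simples.

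For the reverse direction, given an algebraic t-structure $(\sX,\sY)$ with heart $\sH$ a length category, I would lift the simple objects $X_1,\ldots,X_n$ of $\sH$ to a silting object. Concretely, I would build $\sS$ by producing, for each simple $X_i$, an object $S_i \in \sX$ that represents the functor $\Hom_\sH(-, X_i)$ in a derived sense; these are the analogues of projective covers in the ambient triangulated category. Silting-ness of $\sS = \add(S_1 \oplus \cdots \oplus S_n)$ is then checked using the Ext-vanishing implied by $S_i \in \sX$ and minimality, while $\thick(\sS) = \sD$ follows from boundedness of the t-structure together with the fact that every object of $\sH$ has a finite composition series in the simples. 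Finally, one checks that the two maps are mutually inverse by unwinding the definitions on both sides.

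For the intermediate statement, once the global bijection is in hand, the restriction is essentially formal: the partial order satisfies $\sT \geq \sS[1]$ if and only if $\sT \subseteq \sS \orth[<-1] = \sX_\sS[1]$-compatibility, and $\sS \geq \sT$ if and only if $\sT \subseteq \sX_\sS$; combining these with $\sT \in \sS * \sS[1]$ matches exactly the intermediate constraint $\sX_\sS[1] \subseteq \sX_\sT \subseteq \sX_\sS$. Running both t-structure approximations through Proposition~\ref{prop:HRS} (with respect to $\sH_\sS$) identifies the HRS-tilt data with the two-term silting data.

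The main obstacle I expect is the reverse direction: constructing the silting object from an algebraic t-structure requires producing "projective covers" of simples intrinsically inside $\sD$, which is not automatic since $\sH$ need not have enough projectives. One has to exploit the specific ambient category $\Db(\Lambda)$ (so that $\Kb(\proj \Lambda)$ is available as a natural place for silting to live) and the length property of $\sH$ to run a minimality / approximation argument levelwise. Verifying that the resulting object is genuinely silting (rather than merely presilting) and that $\thick(\sS) = \sD$ is where the delicate bookkeeping lies, and where the hypothesis that the heart has \emph{finitely many} simple objects is essential.
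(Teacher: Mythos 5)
First, a point of orientation: the paper does not prove this statement at all --- it is quoted from \cite{KY} and \cite{IJY} and used as a black box, so there is no internal proof to compare your outline against. Judged on its own terms, your plan has the right global shape (a silting subcategory induces a t-structure whose heart is a length category with $|\sS|$ simples; an algebraic t-structure gives back a silting object built from the simples of its heart; the intermediate refinement is bookkeeping with the silting order and HRS-tilts, which is indeed how \cite{IJY} proceed). But there is one concrete error and one essential step left as a black box.

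The error is in the forward direction: you write that ``since $\thick(\sS) = \sD$, every object of $\sD$ can be built from $\sS$ by finitely many triangles'' and deduce the truncation triangles and boundedness from an induction over this filtration. However, $\sS$ is a silting subcategory of $\Kb(\proj(\Lambda))$ and generates only $\thick(\sS) = \Kb(\proj(\Lambda))$, whereas the t-structure $(\sX_\sS,\sY_\sS)$ lives on $\Db(\Lambda)$; when $\Lambda$ has infinite global dimension these categories are genuinely different, and your induction only produces truncations for perfect complexes. Establishing that $((\sS[<0])\orth,(\sS[\geq 0])\orth)$ really is a bounded t-structure on all of $\Db(\Lambda)$ requires a separate argument (e.g.\ comparing with the standard t-structure, or passing through the co-t-structure on $\Kb(\proj(\Lambda))$ as in \cite{KY}). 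The black box is the reverse direction, which you correctly flag as delicate but do not resolve: producing, from a length heart, objects $S_i$ that are genuinely silting (not merely presilting), that lie in $\Kb(\proj(\Lambda))$ rather than just in $\Db(\Lambda)$, and that generate, is where essentially all of the content of \cite{KY} sits --- it is done there by an inductive construction that successively kills negative self-extensions of the simple-minded collection formed by the simples of the heart. Likewise, ``unwinding the definitions'' to see the two maps are mutually inverse requires the Hom-duality between indecomposable summands of $S$ and simples of $\sH_\sS$, which must be proved, not observed. So the proposal points in the right direction but would need substantial additional work before it constitutes a proof; within this paper the honest move is simply to cite \cite{KY} and \cite{IJY}, as the authors do.
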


\begin{definition}
Let $\Lambda$ be a finite-dimensional $\kk$-algebra.
Write $|M|$ for the number of nonisomorphic indecomposable summands of a $\Lambda$-module $M$.
\begin{enumerate}
\item (\cite[Def. 0.1 \& 0.3]{AIR}) A pair $(M,P) \in \mod(\Lambda) \times \proj(\Lambda)$ is a \emph{$\tau$-rigid pair} if $\Hom_\Lambda(M,\tau M) = 0$ and $\Hom_\Lambda(P,M) = 0$. A $\tau$-rigid pair is a \emph{support $\tau$-tilting pair} if $|M| + |P| = |\Lambda|$. If in a support $\tau$-tilting pair $P = 0$, we call $M$ a \emph{$\tau$-tilting module}.
\item (\cite[Def. 1.1]{DIJ}) The algebra $\Lambda$ is \emph{$\tau$-tilting finite} if there are only finitely many isomorphism classes of basic $\tau$-tilting $\Lambda$-modules.
\end{enumerate}
\end{definition}

The following characterisation of support $\tau$-tilting pairs will be useful.

\begin{lemma}[{\cite[Cor. 2.13]{AIR}, see also \cite[Thm. 2.5(3)]{AMV}}] \label{lem:surject}
Let $M \in \mod{\Lambda}$ and $P_1 \rightlabel{\sigma} P_0 \to M \to 0$ be its minimal projective presentation. The pair $(M,P)$ is support $\tau$-tilting if and only if $\Fac(M)$ consists of the $N \in \mod(\Lambda)$ such that $\Hom_\Lambda(\widetilde{\sigma},N)$ is surjective, where $\widetilde{\sigma} = [ \sigma \, 0]$ in the projective presentation $P_1 \oplus P \rightlabel{\widetilde{\sigma}} P_0 \to M$.
\end{lemma}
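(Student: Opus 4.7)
The plan is to reduce the lemma to the fundamental Auslander--Iyama--Reiten characterisation of support $\tau$-tilting pairs via a cohomological computation. I would first establish the natural isomorphism
\[
D\,\mathrm{Coker}\bigl(\Hom_\Lambda(\sigma, N)\bigr) \cong \Hom_\Lambda(N, \tau M)
\]
through the Nakayama--Transpose formalism: since $\Hom_\Lambda(P, N) \cong \Hom_\Lambda(P, \Lambda) \otimes_\Lambda N$ naturally for projective $P$, tensoring the right-exact sequence defining $\mathrm{Tr}(M) = \mathrm{Coker}(\Hom_\Lambda(\sigma, \Lambda))$ with $N$ identifies $\mathrm{Coker}(\Hom_\Lambda(\sigma, N))$ with $\mathrm{Tr}(M) \otimes_\Lambda N$, whose $\kk$-dual by Hom-tensor adjunction is $\Hom_\Lambda(N, D\,\mathrm{Tr}(M)) = \Hom_\Lambda(N, \tau M)$. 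Since $\widetilde{\sigma} = [\sigma \; 0]$ in the presentation $P_1 \oplus P \to P_0$, surjectivity of $\Hom_\Lambda(\widetilde{\sigma}, N)$ splits into the two simultaneous vanishings $\Hom_\Lambda(N, \tau M) = 0$ and $\Hom_\Lambda(P, N) = 0$; the lemma therefore becomes the equivalence
\[
(M, P) \text{ is support $\tau$-tilting} \iff \Fac(M) = \{N \in \mod(\Lambda) : \Hom_\Lambda(N, \tau M) = 0 = \Hom_\Lambda(P, N)\}.
\]

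For the forward implication, the inclusion $\Fac(M) \subseteq \{\cdots\}$ is direct: given an epimorphism $M^k \twoheadrightarrow N$ with kernel $L$, projectivity of $P$ yields a surjection $0 = \Hom_\Lambda(P, M^k) \twoheadrightarrow \Hom_\Lambda(P, N)$, while $\tau$-rigidity of $M$ (equivalently, surjectivity of $\Hom_\Lambda(\sigma, M)$) combined with a diagram chase along $0 \to L \to M^k \to N \to 0$ propagates surjectivity of $\Hom_\Lambda(\sigma, -)$ from $M^k$ to $N$. The reverse inclusion is where the support $\tau$-tilting hypothesis enters in full force: the dimension equality $|M| + |P| = |\Lambda|$ together with the Auslander--Iyama--Reiten bijection \cite[Thm.~2.7]{AIR} between support $\tau$-tilting pairs and functorially finite torsion classes identifies the right-hand set as exactly $\Fac(M)$.

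For the converse implication, specialising $N = M$ in the assumed equality yields $\Hom_\Lambda(M, \tau M) = 0$ and $\Hom_\Lambda(P, M) = 0$, so $(M, P)$ is $\tau$-rigid; the right-hand side is visibly a functorially finite torsion class, so injectivity of the AIR correspondence promotes $\tau$-rigidity to the full support $\tau$-tilting condition. The main obstacle of the whole argument is precisely the reverse inclusion in the forward direction: for a merely $\tau$-rigid pair $(M, P)$ the containment $\Fac(M) \subseteq \{N : \Hom_\Lambda(N, \tau M) = 0, \Hom_\Lambda(P, N) = 0\}$ is typically strict, and it is the counting identity $|M| + |P| = |\Lambda|$ that upgrades it to an equality.
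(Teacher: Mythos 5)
First, note that the paper offers no proof of this lemma: it is imported verbatim from \cite[Cor.~2.13]{AIR} (see also \cite[Thm.~2.5(3)]{AMV}), so there is no in-paper argument to compare yours against; your attempt has to be judged as a proof of the cited result itself. The first half of your reduction is correct and is the standard route: the identification $D\,\mathrm{Coker}(\Hom_\Lambda(\sigma,N))\cong \mathrm{Tr}(M)\otimes_\Lambda N{}^{\vee}\cong\Hom_\Lambda(N,\tau M)$ via the presentation of $\mathrm{Tr}(M)$ and Hom-tensor adjunction is right, surjectivity of $\Hom_\Lambda(\widetilde{\sigma},N)$ does decouple into $\Hom_\Lambda(N,\tau M)=0$ and $\Hom_\Lambda(P,N)=0$, and your diagram chase proving $\Fac(M)\subseteq{}^{\perp}(\tau M)\cap P^{\perp}$ for a $\tau$-rigid pair is sound.

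The genuine gaps are the two places where you appeal to ``the AIR bijection.'' (i) For the reverse inclusion ${}^{\perp}(\tau M)\cap P^{\perp}\subseteq\Fac(M)$ in the forward direction, the bijection $(M,P)\mapsto\Fac(M)$ between support $\tau$-tilting pairs and functorially finite torsion classes says nothing about the set ${}^{\perp}(\tau M)\cap P^{\perp}$, so it cannot ``identify the right-hand set as exactly $\Fac(M)$''; what is actually needed is the Bongartz-type completion theorem \cite[Thm.~2.10]{AIR}, which exhibits ${}^{\perp}(\tau M)\cap P^{\perp}$ as $\Fac(\overline{M})$ for a support $\tau$-tilting completion $(\overline{M},P)$ with $M\in\add(\overline{M})$, after which the counting identity $|M|+|P|=|\Lambda|$ (equivalently, maximality of support $\tau$-tilting pairs among $\tau$-rigid pairs) forces $\add(\overline{M})=\add(M)$. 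You correctly flag this as the crux but do not supply the mechanism; since \cite[Cor.~2.13]{AIR} is itself deduced from these ingredients, citing the correspondence alone is close to circular. (ii) In the converse direction, ``injectivity of the AIR correspondence'' cannot be applied to $(M,P)$, because that pair is not yet known to be support $\tau$-tilting and hence not known to lie in the domain of the correspondence; moreover functorial finiteness of ${}^{\perp}(\tau M)\cap P^{\perp}$ is not ``visible'' (it is again the Bongartz theorem), although under your hypothesis it equals $\Fac(M)$, which is functorially finite for $\tau$-rigid $M$ by Auslander--Smal\o. Even granting that $\Fac(M)$ corresponds to some support $\tau$-tilting pair $(M',P')$, you still owe the argument that $\add(M)=\add(M')$ and $\add(P)=\add(P')$, which again comes from Ext-projectivity of $M$ in $\Fac(M)$ together with the completion/maximality machinery. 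Both halves are repairable, but only by importing \cite[Thm.~2.10]{AIR} (and the maximality statement) explicitly rather than the torsion-class bijection.
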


A result of \cite{DIJ} relates $\tau$-tilting finiteness with functorial finiteness of torsion classes; we refer the reader to, for example \cite{ASS}, for the definition of functorial finiteness.

\begin{theorem}[{\cite[Thm. 3.8]{DIJ}}] \label{thm:functorially-finite}
A finite-dimensional algebra $\Lambda$ is $\tau$-tilting finite if and only if every torsion class (equivalently, every torsionfree class) in $\mod(\Lambda)$ is functorially finite.
\end{theorem}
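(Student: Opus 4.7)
The plan is to exploit the Adachi--Iyama--Reiten bijection between isomorphism classes of basic support $\tau$-tilting pairs $(M,P)$ and functorially finite torsion classes in $\mod(\Lambda)$, given by $(M,P) \mapsto \Fac(M)$; this correspondence is already implicit in Lemma~\ref{lem:surject}. Under it, $\tau$-tilting finiteness is equivalent to there being only finitely many functorially finite torsion classes, so the theorem becomes the statement that every torsion class is functorially finite if and only if there are only finitely many of them.

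For the forward direction, assume $\Lambda$ is $\tau$-tilting finite, so only finitely many functorially finite torsion classes exist. Given an arbitrary torsion class $\mcT$, the natural attempt is to exhibit $\mcT$ itself in the form $\Fac(M)$. I would inductively pick $N_{n+1} \in \mcT \setminus \Fac(N_1 \oplus \cdots \oplus N_n)$ as long as that difference is nonempty, producing a strictly ascending chain of torsion classes $\Fac(N_1) \subsetneq \Fac(N_1 \oplus N_2) \subsetneq \cdots$ sitting inside $\mcT$. If each term in the chain is functorially finite, then the assumed finiteness forces termination, necessarily at $\mcT$, and functorial finiteness of $\mcT$ follows. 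Guaranteeing functorial finiteness at each stage is the content of the brick-based analysis of the Hasse diagram of torsion classes carried out in \cite{DIJ}, where the semibrick labels of cover relations are shown to be the simple objects of explicit wide subcategories and the set of bricks is traded against $\tau$-tilting finiteness.

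For the backward direction, assume every torsion class (equivalently every torsionfree class) is functorially finite. Then the AIR bijection identifies the full lattice of torsion classes with the set of isomorphism classes of basic support $\tau$-tilting pairs, and the task reduces to showing this lattice is finite. Both chain conditions are formal from the hypothesis: an ascending chain $\mcT_1 \subseteq \mcT_2 \subseteq \cdots$ of torsion classes has a torsion-class union $\mcT = \Fac(M)$, and since $M$ lies in some $\mcT_N$ we get $\mcT = \Fac(M) \subseteq \mcT_N$, so the chain stabilizes; the dual argument on torsionfree classes gives DCC. Upgrading ACC plus DCC to outright finiteness requires the brick-labelling argument of \cite{DIJ}: the functorial-finiteness hypothesis is used to rule out infinite antichains of bricks (an infinite antichain would allow one to assemble a torsion class that is not functorially finite), and finiteness of the brick set is then parlayed into finiteness of the lattice of torsion classes.

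The main obstacle in both directions is the same: translating qualitative functorial-finiteness information into quantitative finiteness information. The chain-condition manipulations of torsion classes and the passage through the AIR bijection are formal, but bridging ACC plus DCC to outright finiteness of the lattice of torsion classes is the technical heart of the Demonet--Iyama--Jasso argument and the genuinely hard step; I do not see how to bypass the brick-labelling analysis.
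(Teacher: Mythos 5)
This statement is imported verbatim from \cite[Thm.~3.8]{DIJ} and is used in the paper as a black box: there is no internal proof to compare yours against, so the only meaningful question is whether your blind attempt would stand on its own. It would not, because in both directions the step you defer to ``the brick-based analysis of \cite{DIJ}'' is precisely the load-bearing one. In the forward direction the gap is your clause ``if each term in the chain is functorially finite'': for an arbitrary module $N$, the smallest torsion class $\Fac(N)$ containing it need \emph{not} be functorially finite. Already over the Kronecker algebra, the smallest torsion class containing a quasi-simple regular brick $R_\lambda$ contains the whole tube at $\lambda$, whose members have unbounded length, so it cannot be of the form $\mathsf{Gen}(M)$ for a single module and hence is not functorially finite. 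Under the standing hypothesis of $\tau$-tilting finiteness the claim is of course true, but only as a consequence of the very theorem being proved; the non-circular route is \cite[Thm.~3.1]{DIJ}, which interpolates a functorially finite torsion class strictly between any functorially finite $\mathcal{U}$ and any larger torsion class $\mcT$, and which is proved by $\tau$-tilting reduction rather than by brick labelling. Your iteration scheme is the right shape, but without that theorem it has no content.

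In the backward direction, your ACC/DCC observation is correct and genuinely useful --- it shows that a strictly ascending union of torsion classes cannot be functorially finite, which is exactly how one derives a contradiction once an infinite chain is in hand --- but, as you concede, ACC plus DCC for a poset does not imply finiteness (an infinite antichain satisfies both), and the passage from ``infinitely many support $\tau$-tilting modules'' to ``an infinite strictly ascending or descending chain of torsion classes'' is again the technical heart of \cite{DIJ} and is not supplied. In sum, the proposal correctly identifies the skeleton of the Demonet--Iyama--Jasso argument and the role of the \cite{AIR} bijection, but neither implication is actually proved without presupposing the results of the paper being cited.
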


The results of \cite{AIR} combined with \cite{IJY}  give the following.

\begin{theorem}[{\cite[Thm. 4.6]{IJY} and \cite{AIR}}] \label{thm:tau-tilting}
Let $\sD$ be a Krull-Schmidt, Hom-finite, $\kk$-linear triangulated category and $\sS = \add(S)$ for a silting object $S$. Let $\Gamma = \End_\sD(S)$. Then there is a bijection between the following sets:
\begin{enumerate}
\item basic silting objects $T$ of $\sD$ with $T \in S * S[1]$, modulo isomorphism; and,
\item basic support $\tau$-tilting modules of $\mod(\Gamma)$, and,
\item torsion pairs $(\mcT,\mcF)$ in $\mod(\Gamma)$ in which $\mcT$ and $\mcF$ are functorially finite.
\end{enumerate}
\end{theorem}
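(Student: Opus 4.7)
The proof decomposes naturally into two bijections, (1) $\leftrightarrow$ (2) and (2) $\leftrightarrow$ (3), which are then composed.

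For (2) $\leftrightarrow$ (3) I would follow the Adachi--Iyama--Reiten correspondence: send a support $\tau$-tilting pair $(M,P)$ to the torsion pair $\big(\Fac(M), M\orth\big)$ in $\mod(\Gamma)$. Functorial finiteness of $\Fac(M)$ is witnessed by the surjectivity criterion of Lemma~\ref{lem:surject} applied to the minimal projective presentation of $M$: that criterion is precisely the existence of a left $\add(M)$-approximation for every target. Conversely, to a functorially finite torsion class $\mcT$ one assigns the basic additive generator $M$ of the Ext-projectives in $\mcT$; $\tau$-rigidity of $M$ follows from the standard reformulation of Ext-projectivity via $\tau$ in $\mod(\Gamma)$, and the support-projective summand $P$ is the part of $\Gamma$ annihilated by $\mcT$.

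For (1) $\leftrightarrow$ (2) I would use the cohomological functor $F = \Hom_\sD(S,-) \colon \sD \to \mod(\Gamma)$, which is fully faithful on $\add(S)$ and identifies $\add(S)$ with $\proj(\Gamma)$. Given a basic silting object $T \in S*S[1]$, decompose $T = T^0 \oplus T^1[1]$ with $T^0, T^1 \in \add(S)$, and split off the largest summand $P'$ of $T^1$ for which $F(P') \in F(T^0)\orth$. Fit $T^0$ into a triangle $S_1 \to S_0 \to T^0 \to S_1[1]$ with $S_i \in \add(S)$; applying $F$ produces a minimal projective presentation $F(S_1) \to F(S_0) \to F(T^0) \to 0$. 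Set $M = F(T^0)$ and $P = F(P')$. The count $|M| + |P| = |T| = |\Gamma|$ follows from fullness of $F$ on $\add(S)$; $\Hom_\Gamma(P,M) = 0$ is tautological; and $\tau$-rigidity of $M$ translates the vanishing $\Hom_\sD(T,T[1]) = 0$ through an Auslander--Reiten type formula relating $\Hom_\sD(-,-[1])$ to $\Hom_\Gamma(-, \tau-)$ on the two-term piece. Conversely, any minimal projective presentation $Q_1 \to Q_0 \to M \to 0$ lifts via $F$ to a triangle in $\sD$ whose third term, together with a shifted lift of $P$, assembles into a basic two-term silting object by Lemma~\ref{lem:surject}.

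The technical heart of the argument is the translation between $\Hom_\sD(T,T[1]) = 0$ on the two-term piece $S*S[1]$ and $\tau$-rigidity $\Hom_\Gamma(M, \tau M) = 0$ in $\mod(\Gamma)$. This rests on a triangulated analogue of the Auslander--Reiten formula and uses all of the Krull-Schmidt, Hom-finite, $\kk$-linear hypotheses in an essential way. Once this translation is in place, the remaining verifications are essentially combinatorial bookkeeping with Theorem~\ref{thm:functorially-finite} and Lemma~\ref{lem:surject}.
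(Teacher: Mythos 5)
The paper does not prove this statement: it is quoted wholesale from \cite[Thm.~4.6]{IJY} and \cite{AIR}, so your sketch has to be measured against the proofs in those sources. Your overall architecture --- composing (1)$\leftrightarrow$(2) with (2)$\leftrightarrow$(3), sending $(M,P)$ to $(\Fac(M),M\orth)$ and a functorially finite torsion class to its Ext-projective generator with the annihilated projectives as $P$, and using $F=\Hom_\sD(S,-)$ to pass between two-term silting objects and modules with presentations --- is exactly theirs, and your account of (2)$\leftrightarrow$(3) is a faithful summary of \cite[Thm.~2.7]{AIR}. Two points in the (1)$\leftrightarrow$(2) half need repair, one cosmetic and one substantive. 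The cosmetic one: a basic silting object $T\in S*S[1]$ does \emph{not} decompose as $T^0\oplus T^1[1]$ with $T^0,T^1\in\add(S)$; rather $T^1[1]$ collects the summands of $T$ lying in $\add(S)[1]$, while $T^0$ is a genuine cone $S_1\to S_0\to T^0\to S_1[1]$ (your next sentence shows you know this, but as written $T^0\in\add(S)$ would force $M=F(T^0)$ to be projective). Relatedly, the ``largest summand $P'$ of $T^1$ with $F(P')\in F(T^0)\orth$'' is a red herring: applying $\Hom_\sD(T^1,-)$ to the triangle shows $\Hom_\Gamma(F(T^1),M)\cong\Hom_\sD(T^1,T^0)$, which vanishes for \emph{all} of $T^1$ by the silting condition, so $P=F(T^1)$ outright.

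The substantive gap is precisely what you call the technical heart. You propose to translate $\Hom_\sD(T,T[1])=0$ into $\Hom_\Gamma(M,\tau M)=0$ via ``a triangulated analogue of the Auslander--Reiten formula'' using the Krull--Schmidt, Hom-finite, $\kk$-linear hypotheses. No such formula is available in this generality: $\sD$ carries no Serre functor and no Auslander--Reiten triangles, so there is nothing in $\sD$ to relate $\Hom_\sD(-,-[1])$ to a translate. The argument of \cite{IJY} avoids this entirely. Applying $\Hom_\sD(-,T)$ to the triangle $S_1\to S_0\to T^0\to S_1[1]$ and using only $\Hom_\sD(S,S[1])=0$ identifies $\Hom_\sD(T,T[1])$ with the cokernel of $\Hom_\Gamma(P_0,M)\to\Hom_\Gamma(P_1\oplus P,M)$, i.e.\ with the failure of surjectivity of $\Hom_\Gamma(\widetilde{\sigma},M)$; only \emph{then} does one invoke the classical Auslander--Reiten formula, inside $\mod(\Gamma)$ for the finite-dimensional algebra $\Gamma$ (\cite[Prop.~2.4]{AIR}, the statement underlying Lemma~\ref{lem:surject}), to convert that surjectivity into $\tau$-rigidity of $(M,P)$. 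In short, the AR-theoretic input lives entirely in $\mod(\Gamma)$ after transport along the restricted Yoneda functor, not in $\sD$; as proposed, your key step would not go through under the stated hypotheses, although the surrounding structure of your argument is the correct one.
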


\begin{remark} \label{rem:tau-tilting}
Suppose we are in the setup of Theorem~\ref{thm:tau-tilting}. Recall from \cite[Rem. 4.1(ii)]{IJY} there is an equivalence $\Mod(\sS) \simeq \Mod(\Gamma)$, where $\Mod(\sS)$ is the category of contravariant functors from $\sS$ to the category of abelian groups.
Let $(M,P)$ be a support $\tau$-tilting pair of $\mod(\Gamma)$ with minimal projective presentation $P_1 \rightlabel{\sigma} P_0 \to M \to 0$ and the `extended' presentation $\widetilde{P} = P_1 \oplus P \rightlabel{\widetilde{\sigma}} P_0 \to M \to 0$ of Lemma~\ref{lem:surject}. One can uniquely lift this presentation to $\Mod(\sS)$ as $\Hom_\sD(-,\widetilde{S})|_\sS \rightlabel{(-,f)} \Hom_\sD(-,S_0)|_\sS$, where $\Hom_\sD(-,D)|_\sS$ denotes the image of $D$ under the restricted Yoneda functor \cite{Auslander}; cf. \cite[Rem 3.1]{IJY}. The corresponding silting object $T \in S * S[1]$ is then the mapping cone of $f \colon \widetilde{S} \to S_0$ in $\sD$.
\end{remark}

%===============================================================
% SECTION
\section{Proof of Theorem~\ref{thm:main}} \label{sec:proof}
%===============================================================

We start by showing that when $\Lambda$ is silting-discrete any HRS-tilt of an algebraic t-structure is again algebraic.

\begin{proposition} \label{prop:algebraic}
Let $\Lambda$ be a silting-discrete finite-dimensional algebra. Let $\sS \subseteq \Kb(\proj(\Lambda))$ be a silting subcategory and $(\sX_\sS,\sY_\sS)$ be the corresponding algebraic t-structure on $\Db(\Lambda)$. If $(\sX,\sY)$ is a t-structure intermediate with respect to $(\sX_\sS,\sY_\sS)$ then $(\sX,\sY)$ is algebraic.
\end{proposition}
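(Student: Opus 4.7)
The strategy is to use the bijections in Theorems~\ref{thm:koenig-yang} and \ref{thm:tau-tilting} to translate the problem into a question about functorial finiteness of torsion pairs in the module category of the endomorphism algebra $\Gamma = \End_\sD(\sS)$, and then apply Theorem~\ref{thm:functorially-finite}. Write $\sH_\sS$ for the heart of $(\sX_\sS,\sY_\sS)$; by Theorem~\ref{thm:koenig-yang}, $\sH_\sS$ is equivalent to $\mod(\Gamma)$. By Proposition~\ref{prop:HRS}, the intermediate t-structure $(\sX,\sY)$ corresponds to some torsion pair $(\mcT,\mcF)$ in $\sH_\sS \simeq \mod(\Gamma)$. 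The aim is to realize this torsion pair as coming from a silting object $T \in \sS * \sS[1]$, so that the associated algebraic t-structure coincides with $(\sX,\sY)$.

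The first, and I expect main, step is to show that $\Gamma$ is $\tau$-tilting finite. Silting-discreteness of $\Lambda$ guarantees that there are only finitely many silting objects $T$ with $\sS \geq T \geq \sS[1]$, equivalently only finitely many basic silting objects $T \in \sS * \sS[1]$. The bijection of Theorem~\ref{thm:tau-tilting} then produces only finitely many isomorphism classes of basic support $\tau$-tilting pairs in $\mod(\Gamma)$, and in particular only finitely many $\tau$-tilting $\Gamma$-modules. Hence $\Gamma$ is $\tau$-tilting finite in the sense of the paper's definition.

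Applying Theorem~\ref{thm:functorially-finite}, every torsion class and every torsionfree class in $\mod(\Gamma)$ is functorially finite; in particular, both $\mcT$ and $\mcF$ are functorially finite. Theorem~\ref{thm:tau-tilting} then yields a basic silting object $T \in \sS * \sS[1]$ (obtained explicitly as a mapping cone as in Remark~\ref{rem:tau-tilting}) whose corresponding torsion pair in $\mod(\Gamma)$ is $(\mcT,\mcF)$.

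Finally, by Theorem~\ref{thm:koenig-yang} this silting object $T$ determines an intermediate algebraic t-structure $(\sX_T,\sY_T)$ with respect to $(\sX_\sS,\sY_\sS)$, and by construction its associated torsion pair in $\sH_\sS$ is $(\mcT,\mcF)$. Since the HRS-tilt bijection of Proposition~\ref{prop:HRS} is injective, $(\sX_T,\sY_T) = (\sX,\sY)$, so $(\sX,\sY)$ is algebraic. The only non-routine point is extracting $\tau$-tilting finiteness of $\Gamma$ from silting-discreteness of $\Lambda$; once that is in hand the rest is a mechanical concatenation of the bijections recalled in Section~\ref{sec:background}.
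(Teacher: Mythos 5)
Your overall architecture is the same as the paper's: deduce that $\Gamma$ is $\tau$-tilting finite from silting-discreteness via Theorem~\ref{thm:tau-tilting}, apply Theorem~\ref{thm:functorially-finite} to get functorial finiteness of the torsion pair $(\mcT,\mcF)$ produced by Proposition~\ref{prop:HRS}, realise $\mcT$ as $\Fac(M)$ for a support $\tau$-tilting pair, and pass to the corresponding silting object $T \in \sS * \sS[1]$ and its algebraic t-structure. That part is correct and matches the paper.

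However, there is a genuine gap at the final step, and you have misidentified where the real work lies. You assert that ``by construction'' the torsion pair in $\sH_\sS$ associated to $T$ is $(\mcT,\mcF)$, and describe the remainder as a mechanical concatenation of bijections. But nothing in the cited statements guarantees that the two routes from silting objects $T \in \sS * \sS[1]$ to torsion pairs in $\sH_\sS$ agree: one route is the bijection of Theorem~\ref{thm:tau-tilting} (silting object $\to$ support $\tau$-tilting pair $(M,P)$ $\to$ torsion class $\Fac(M)$), the other is the composite of the Koenig--Yang correspondence (Theorem~\ref{thm:koenig-yang}, giving $(\sX_\sT,\sY_\sT)$) with the HRS correspondence (Proposition~\ref{prop:HRS}, giving $\mcT_\sT = \sX_\sT \cap \sH_\sS$). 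The identity $\mcT_\sT = \Fac(M)$ is exactly what the paper spends the second half of its proof establishing: for $N \in \sH_\sS$ one has $N \in (\sT[<-1])\orth$ automatically since $\sT \subseteq \sS * \sS[1]$, so $N \in \mcT_\sT$ iff $N \in (\sT[-1])\orth$; using the triangle $\widetilde{S} \rightlabel{f} S_0 \to T \to \widetilde{S}[1]$ and the restricted Yoneda functor, this is equivalent to surjectivity of $\Hom_\sD(f,N)$, which by Lemma~\ref{lem:surject} is equivalent to $N \in \Fac(M)$. Without this verification (or a citation that explicitly asserts the compatibility of the two correspondences), the injectivity of the HRS bijection cannot be invoked to conclude $(\sX,\sY) = (\sX_\sT,\sY_\sT)$. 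By contrast, the step you single out as the ``only non-routine point'' --- $\tau$-tilting finiteness of $\Gamma$ --- is immediate from the definitions once Theorem~\ref{thm:tau-tilting} is in hand.
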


\begin{proof}
Suppose $(\sX,\sY)$ is a t-structure intermediate with respect to the algebraic t-structure $(\sX_\sS,\sY_\sS)$, where $\sS = \add(S)$ for some basic silting object $S$. 
First observe that since $(\sX,\sY)$ is intermediate with respect to a bounded t-structure $(\sX_\sS,\sY_\sS)$ it is automatically bounded.
Let $\Gamma = \End_{\Kb(\proj(\Lambda))}(S)$ and note that $\sH_\sS \simeq \mod(\Gamma)$ by \cite{KY}. Since $\Lambda$ is silting-discrete, there are finitely many silting objects in $\sS * \sS[1]$, and therefore, by Theorem~\ref{thm:tau-tilting}, finitely many support $\tau$-tilting modules in $\mod(\Gamma)$, whence $\Gamma$ is $\tau$-tilting finite.

By Proposition~\ref{prop:HRS}, there exists a torsion pair $(\mcT,\mcF)$ on $\sH_\sS$ such that $\sX = \extn{\mcT, \sX_\sS [1]}$ and $\sY = \extn{\sY_\sS,\mcF}$. By Theorem~\ref{thm:functorially-finite}, $\mcT$ and $\mcF$ are functorially finite, so that by the correspondence in Theorem~\ref{thm:tau-tilting}, $\mcT = \Fac(M)$ for some support $\tau$-tilting pair $(M,P)$ of $\mod(\Gamma)$, which in turn corresponds to some silting object $T \in \sS * \sS[1]$. By Theorem~\ref{thm:koenig-yang}, this corresponds to an algebraic t-structure $(\sX_\sT,\sY_\sT)$ that is intermediate with respect to $(\sX_\sS,\sY_\sS)$. Invoking Proposition~\ref{prop:HRS} again, there is a torsion pair $(\mcT_\sT,\mcF_\sT)$ on $\sH_\sS$ such that $\sX_\sT = \extn{\mcT_\sT, \sX_\sS [1]}$ and $\sY_\sT = \extn{\sY_\sS,\mcF}$. Furthermore, $\mcT_\sT = \sX_\sT \cap \sH_\sS$.

We claim that $\mcT_\sT = \mcT$. First observe that any $N \in \sH_\sS$ satisfies $N \in (\sT[<-1])\orth$ because $\sT \subseteq \sS * \sS[1]$. Therefore $N \in \sH_\sS$ lies in $\mcT_\sT$ if and only if $N \in (\sT[-1])\orth$.
By Lemma~\ref{lem:surject}, $N \in \Fac(M)$ if and only if $\Hom_{\sH_\sS}(\widetilde{\sigma},N)$ is surjective, where we use the notation of Remark~\ref{rem:tau-tilting}. 
By Remark~\ref{rem:tau-tilting}, we can lift $\widetilde{\sigma}$ to the functor category as $\Hom_\sD(-,\widetilde{S})|_\sS \rightlabel{(-,f)} \Hom_\sD(-,S_0)|_\sS$, and note that via the restricted Yoneda functor (e.g. \cite[Rem. 3.1]{IJY}), $\Hom_{\sH_\sS}(\widetilde{\sigma},N)$ is surjective if and only if 
\[
\scalebox{0.95}{\xymatrix @C=36pt{
\Hom_{\Mod(\sS)}(\Hom_\sD(-,S_0)|_\sS, \Hom_\sD(-,N)|_\sS) \ar[r]^-{(f^*,(-,N))}  & \Hom_{\Mod(\sS)}(\Hom_\sD(-,\widetilde{S})|_\sS,\Hom_\sD(-,N)|_\sS) \\
\Hom_\sD(S_0,N)|_\sS \ar[r]_-{(f,N)} \ar[u]_{\sim}                                                                                                  & \Hom_\sD(\widetilde{S},N)|_\sS \ar[u]_{\sim} 
}}
\]                                                                       
is surjective, where the vertical arrows are given by the Yoneda embedding. 
But since an additive generator of $\sT$ is given as the mapping cone $\widetilde{S} \rightlabel{f} S_0 \to T \to \widetilde{S}[1]$ and $\Hom_\sD(S_0[-1],N)|_\sS = 0$ since $N \in \sH_\sS$, we have $N \in (\sT[-1])\orth$ if and only if $N \in \Fac(M)$. Hence $\mcT_\sT = \mcT$. It follows that $(\sX,\sY) = (\sX_\sT,\sY_\sT)$, i.e. any t-structure intermediate with respect to $(\sX_\sS,\sY_\sS)$ is algebraic.
\end{proof}

We shall need the following straightforward observation; cf. \cite[Lem. 2.9]{QW}.

\begin{lemma} \label{lem:sandwich}
Suppose $(\sX,\sY)$ is a bounded t-structure on $\Db(\Lambda)$ and $(\sX_\sS,\sY_\sS)$ is an algebraic t-structure on $\Db(\Lambda)$. 
There exist integers $m \geq n$ such that $\sX_\sS[m] \subseteq \sX \subseteq \sX_\sS[n]$.
\end{lemma}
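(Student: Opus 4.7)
The plan is to use the fact that the heart $\sH_\sS$ of the algebraic t-structure is a length category with only finitely many simples, combined with the fact that the aisle and coaisle of any t-structure are extension closed. The idea is to trap the (finitely many) simples of $\sH_\sS$ inside common shifts of $\sX$ and of $\sY$, then let extension closure lift this control first to all of $\sH_\sS$ and then, via $\sX_\sS = \extn{\sH_\sS[\geq 0]}$ and $\sY_\sS = \extn{\sH_\sS[<0]}$, to all of $\sX_\sS$ and $\sY_\sS$.

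Concretely, fix representatives $S_1,\dots,S_r$ of the simples of $\sH_\sS$. Boundedness of $(\sX,\sY)$ together with the monotonicity $\sX \supseteq \sX[1] \supseteq \sX[2] \supseteq \cdots$ and $\sY \subseteq \sY[1] \subseteq \sY[2] \subseteq \cdots$, whose unions exhaust $\sD$, guarantees for each $i$ a largest integer $a_i$ with $S_i \in \sX[a_i]$ and a smallest integer $b_i$ with $S_i \in \sY[b_i]$. The standard fact that the aisle and coaisle of a t-structure meet only at zero, applied to the shifted t-structure $(\sX[k],\sY[k])$, forces $a_i < b_i$. Setting $A = \min_i a_i$ and $B = \max_i b_i$, monotonicity yields $S_i \in \sX[A] \cap \sY[B]$ for every $i$; since $\sX[A]$ and $\sY[B]$ are extension closed and every object of the length category $\sH_\sS$ is an iterated extension of simples, this upgrades to $\sH_\sS \subseteq \sX[A] \cap \sY[B]$.

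To finish, I would propagate to $\sX_\sS$ and $\sY_\sS$. For $k \geq 0$, $\sH_\sS[k] \subseteq \sX[A+k] \subseteq \sX[A]$, so extension closure gives $\sX_\sS \subseteq \sX[A]$, equivalently $\sX_\sS[-A] \subseteq \sX$. Dually, for $k \leq -1$, $\sH_\sS[k] \subseteq \sY[B+k] \subseteq \sY[B-1]$, so $\sY_\sS \subseteq \sY[B-1]$; taking left perpendicular (which reverses inclusions and commutes with shifts) rewrites this as $\sX \subseteq \sX_\sS[-B+1]$. Setting $m = -A$ and $n = -B+1$ produces the desired sandwich, and the earlier inequality $a_i < b_i$ translates into $A \leq B-1$, i.e.\ $m \geq n$.

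I do not expect any substantive obstacle here; the only thing that requires care is keeping the direction of inclusions and the signs of shifts consistent, in particular at the final step where a coaisle inclusion $\sY_\sS \subseteq \sY[B-1]$ must be rephrased as an aisle inclusion $\sX \subseteq \sX_\sS[-B+1]$.
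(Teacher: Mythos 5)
Your proof is correct and follows essentially the same route as the paper's: trap the finitely many simples of $\sH_\sS$ in a common shift of $\sX$ and of $\sY$ using boundedness, then use extension closure and $\sX_\sS = \extn{\sH_\sS[\geq 0]}$, $\sY_\sS = \extn{\sH_\sS[<0]}$ to propagate to the whole aisle and coaisle. The only differences are cosmetic (you pass to $\sX_\sS \subseteq \sX[A]$ directly where the paper takes perpendiculars, and you spell out why $m \geq n$ holds, which the paper leaves implicit).
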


\begin{proof}
Note that $\sX_\sS[m] \subseteq \sX \subseteq \sX_\sS[n]$ is equivalent to $\sY_\sS[m] \supseteq \sY \supseteq \sY_\sS[n]$.
Since $(\sX_\sS,\sY_\sS)$ is algebraic, there exists finitely many simple objects $X_1, \ldots, X_t \in \sH_\sS$ such that $\extn{ X_1, \ldots, X_t} = \sH_\sS$.
The boundedness of $(\sX,\sY)$ asserts the existence of an integer $k$ such that $X_i \in \sX[k]$ for each $1 \leq i \leq t$, whence $\sH_\sS [\geq 0] \subseteq \sX[k]$. Thus, $(\sH_\sS [\geq 0])\orth = \sY_\sS \supseteq \sY[k]$, and we can take $m = -k$.
Analogously, there also exists an $l$ such that  $X_i \in \sY[l]$ for each $1 \leq i \leq t$, so that $\sH_\sS [< 0] \in \sY[l-1]$ and $\orth (\sH_\sS [< 0]) = \sX_\sS \supseteq \sX[l-1]$, and we can take $n= 1- l $.
\end{proof}

\begin{lemma} \label{lem:intermediate}
Let $\Lambda$ be a silting-discrete finite-dimensional algebra. Suppose $(\sX,\sY)$ is a bounded t-structure on $\Db(\Lambda)$. Then there exists a silting subcategory $\sS = \add(S)$ and an algebraic t-structure $(\sX_\sS,\sY_\sS)$ such that $\sX_\sS[1] \subseteq \sX \subseteq \sX_\sS$.
\end{lemma}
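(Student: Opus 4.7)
The plan is to first apply Lemma~\ref{lem:sandwich} to produce an initial sandwich, and then iteratively refine it via HRS-tilts using Proposition~\ref{prop:algebraic}. Fixing any silting object $\sS_0$ (say $\sS_0 = \add(\Lambda)$), Lemma~\ref{lem:sandwich} together with a shift gives some $n \geq 1$ with $\sX_{\sS_0}[n] \subseteq \sX \subseteq \sX_{\sS_0}$. I will induct on $n$; the base $n = 1$ is precisely the conclusion with $\sS = \sS_0$. For the inductive step, assuming $n \geq 2$, the goal is to exhibit a silting $\sT \in \sS_0 * \sS_0[1]$ satisfying $\sX_\sT[n-1] \subseteq \sX \subseteq \sX_\sT$, so that the inductive hypothesis applied to $\sT$ yields the desired silting $\sS$.

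To construct $\sT$, I will use the subcategory
\[
\mcT := \{ H \in \sH_{\sS_0} \mid H[n-1] \in \sX \}
\]
of $\sH_{\sS_0}$. Closure of $\mcT$ under extensions and direct summands is immediate; closure under quotients is the content to verify: a short exact sequence $0 \to K \to H \to N \to 0$ in $\sH_{\sS_0}$ with $H \in \mcT$ yields a triangle in $\sD$ which, after shifting by $n-1$ and rotating, displays $N[n-1]$ as an extension of $H[n-1] \in \sX$ and $K[n] \in \sH_{\sS_0}[n] \subseteq \sX_{\sS_0}[n] \subseteq \sX$, forcing $N[n-1] \in \sX$ by extension closure. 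Thus $\mcT$ is a torsion class in $\sH_{\sS_0}$. Since silting-discreteness forces $\End(\sS_0)$ to be $\tau$-tilting finite (as in the proof of Proposition~\ref{prop:algebraic}), Theorem~\ref{thm:functorially-finite} ensures $\mcT$ is functorially finite; combined with Proposition~\ref{prop:HRS} and Proposition~\ref{prop:algebraic}, we obtain a silting $\sT \in \sS_0 * \sS_0[1]$ whose corresponding t-structure has $\sX_\sT = \mcT * \sX_{\sS_0}[1]$.

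The main obstacle is verifying $\sX_\sT[n-1] \subseteq \sX \subseteq \sX_\sT$. The inclusion $\sX_\sT[n-1] \subseteq \sX$ is routine: $\sX_\sT[n-1] = \mcT[n-1] * \sX_{\sS_0}[n]$ and both factors lie in the extension-closed $\sX$. For the reverse inclusion, given $X \in \sX$ I consider the $(\sX_{\sS_0}, \sY_{\sS_0})$-truncation triangle $\tau_{\leq -1} X \to X \to \tau_{\geq 0} X$, with $\tau_{\leq -1} X \in \sX_{\sS_0}[1]$ and $\tau_{\geq 0} X \in \sH_{\sS_0}$. Shifting by $n-1$ and rotating places $(\tau_{\geq 0} X)[n-1]$ between outer terms $X[n-1] \in \sX$ and $(\tau_{\leq -1} X)[n] \in \sX_{\sS_0}[n+1] \subseteq \sX_{\sS_0}[n] \subseteq \sX$, so $(\tau_{\geq 0} X)[n-1] \in \sX$, i.e., $\tau_{\geq 0} X \in \mcT$. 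The original truncation triangle then realises $X$ as an extension with both outer terms in $\sX_\sT$ (since $\sX_{\sS_0}[1], \mcT \subseteq \sX_\sT$), so $X \in \sX_\sT$ by extension closure. The inductive hypothesis applied to $\sT$ completes the proof.
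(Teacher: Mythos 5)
Your proof is correct and follows essentially the same strategy as the paper's first proof of this lemma: normalise the sandwich from Lemma~\ref{lem:sandwich}, reduce its width by one via an HRS-tilt of the ambient algebraic t-structure at a suitable torsion pair in its heart (using that hearts of algebraic t-structures are module categories and invoking Proposition~\ref{prop:algebraic} to keep the tilt algebraic), and induct. The only difference is the choice of torsion pair --- you specify the torsion class $\mcT = \{H \in \sH_{\sS_0} \mid H[n-1] \in \sX\}$ directly, whereas the paper specifies the torsionfree class $\mcF = \sH_\sT \cap \sY$ and must then verify that $\orth(\sY_\sT[1]\cap\sY)$ agrees with the resulting HRS-tilt --- but this is a cosmetic variation, and your verification of the two sandwich inclusions is sound.
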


We give two proofs of this lemma. The first one is tailored to the level of generality of this note, while the second one uses a technique that could possibly be adapted to a more general setting.

\begin{proof}
By Lemma~\ref{lem:sandwich}, there are integers $m \geq n$ and an algebraic t-structure $(\sX_\sT,\sY_\sT)$ such that $\sX_\sT[m] \subseteq \sX \subseteq \sX_\sT[n]$. Choose $m$ minimal and $n$ maximal; without loss of generality we may assume $n=0$.
Following \cite[\S 2]{FMT}, we set $\sY_1 = \sY_\sT[1] \cap \sY$, and $\sX_1=\orth\sY_1$ and claim that $(\sX_1, \sY_1)$ is a t-structure.  
One can check immediately in this case that ${\sY_\sT}[1] \supseteq {\sY_1} \supseteq {\sY_\sT}$ (equivalently, ${\sX_\sT}[1] \subseteq {\sX_1} \subseteq {\sX_\sT}$); and ${\sY_1}[m-1] \supseteq \sY \supseteq {\sY_1}$, i.e. ${\sX_1}[m-1] \subseteq \sX \subseteq {\sX_1}$, and the lemma would follow by induction. It is therefore sufficient to establish the claim.

Let $\mcF = \sH_\sT \cap \sY_1 = \sH_\sT \cap \sY$ and observe that $\mcF$ is closed under extensions and direct summands because $\sH_\sT$ and $\sY_1$ are. Let $F \in \mcF$ be an object and consider a short exact sequence $0 \to F' \to F \to F'' \to 0$ in $\sH_\sT$, which gives rise to a triangle $F''[-1] \to F' \to F \to F''$ in $\Db(\Lambda)$. Since $F'' \in \sH_\sT \subseteq \sY_\sT[1]$, whence $F''[-1] \in \sY_\sT$. Using $\sY_\sT \subseteq \sY_\sT[1]$ and $\sY_\sT \subseteq \sY$, we get $F''[-1] \in \sY_1$. This gives $F' \in \sY_1$, i.e. $\mcF$ is closed under subobjects. Since $\sH_\sT \simeq \mod(\Gamma)$, where $\Gamma = \End(T)$, it follows that $\mcF$ is a torsionfree class giving rise to a torsion pair $(\mcT,\mcF)$ with $\mcT = \orth\mcF$.

By Proposition~\ref{prop:HRS} there is a t-structure $(\widehat{\sX_1},\widehat{\sY_1})$ with $\widehat{\sX_1} = \extn{{\mcT}, {\sX_\sT}[1]}$ and $\widehat{\sY_1} = \extn{{\sY_\sT}, {\mcF}}$. The inclusion $\extn{\sY_\sT, \mcF} \subseteq \sY_1$ is clear. For the other inclusion, take $Y \in \sY_\sT[1] \cap \sY$ and consider the truncation triangle for $Y$ with respect to $(\sX_\sT, \sY_\sT)$:
\[
\tau_{\sX_\sT}Y \to Y \to \tau_{\sY_\sT} Y \to \tau_{\sX_\sT}Y[1].
\]
Since $Y \in \sY_\sT [1]$, we have $\tau_{\sX_\sT} Y \in \sH_\sT$. Since $Y \in \sY$ and $\tau_{\sY_\sT} Y [-1] \in \sY_\sT [-1] \subseteq \sY_\sT \subseteq \sY$ we have $\tau_{\sX_\sT} Y \in \sY$. So $\tau_{\sX_\sT} Y \in \mcF$ and $Y \in \extn{\sY_\sT, \mcF}$, whence $(\sX_1, \sY_1) = (\widehat{\sX_1},\widehat{\sY_1})$ is a t-structure as claimed.
\end{proof}

\begin{proof}[Second proof of Lemma~\ref{lem:intermediate}]
We first need the following lifting and restriction lemma.

\begin{lemma} \label{lem:lift}
If $(\sX,\sY)$ is a t-structure on $\Db(\Lambda)$, then $(\widetilde{\sX},\widetilde{\sY}):=(\orth(\sX\orth),\sX\orth)$ is a t-structure on $\sD(\Lambda)$ such that $(\widetilde{\sX} \cap \Db(\Lambda),\widetilde{\sY} \cap \Db(\Lambda))=(\sX,\sY)$. 
Moreover, if there are integers $m \geq n$ such that $\sX_\sS[m] \subseteq \sX \subseteq \sX_\sS[n]$ for some algebraic t-structure $(\sX_\sS,\sY_\sS)$, then we also have  $\widetilde{\sX_\sS}[m] \subseteq \widetilde{\sX} \subseteq \widetilde{\sX_\sS}[n]$.
\end{lemma}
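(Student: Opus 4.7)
The plan is to verify the three t-structure axioms for the pair $(\widetilde{\sX}, \widetilde{\sY})$ in $\sD(\Lambda)$, then read off the restriction identity and the "moreover" statement. The semi-orthogonality $\Hom_{\sD(\Lambda)}(\widetilde{\sX}, \widetilde{\sY}) = 0$ is built into the definition $\widetilde{\sX} = \orth(\sX\orth)$, and closure under the shift follows directly from $\sX[1] \subseteq \sX$: for $D \in \sX\orth$ and $X \in \sX$ one has $\Hom(X, D[-1]) = \Hom(X[1], D) = 0$, so $\widetilde{\sY}[-1] \subseteq \widetilde{\sY}$, equivalently $\widetilde{\sX}[1] \subseteq \widetilde{\sX}$. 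The main obstacle is producing truncation triangles with respect to $(\widetilde{\sX}, \widetilde{\sY})$ for an arbitrary $D \in \sD(\Lambda)$.

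For the truncations I would exploit the sandwich provided by Lemma~\ref{lem:sandwich}: choose an algebraic t-structure $(\sX_\sS,\sY_\sS)$ and integers $m \geq n$ with $\sX_\sS[m] \subseteq \sX \subseteq \sX_\sS[n]$. The algebraic t-structure admits an obvious lift to an unbounded t-structure $(\widetilde{\sX_\sS}, \widetilde{\sY_\sS})$ on $\sD(\Lambda)$ whose heart is $\Mod(\End(S))$. By iterated application of Proposition~\ref{prop:HRS}, the bounded t-structure $(\sX,\sY)$ is obtained from a shift of $(\sX_\sS,\sY_\sS)$ via a finite chain of HRS-tilts at torsion pairs $(\mcT_i, \mcF_i)$ in the corresponding bounded hearts. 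I would extend each bounded torsion pair canonically to a torsion pair on the corresponding unbounded heart (using that $\mod(\Gamma)$ sits inside $\Mod(\Gamma)$ as a subcategory closed under subobjects and quotients, so the torsion and torsionfree classes can be specified by $\Hom$-vanishing against the bounded parts), and then HRS-tilt in $\sD(\Lambda)$ at each extended torsion pair. Iterating gives an unbounded t-structure whose aisle coincides with $\orth(\sX\orth)$ by a standard $\Hom$-vanishing argument, supplying the required truncations.

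The restriction identity is now automatic: $\widetilde{\sY} \cap \Db(\Lambda) = \sY$ because $\Hom$-groups agree in the two categories, so $\sX\orth$ computed in either yields the same answer on bounded objects; and $\widetilde{\sX} \cap \Db(\Lambda) = \sX$ by uniqueness of truncations together with the bounded truncations provided by $(\sX,\sY)$. The "moreover" statement follows formally from monotonicity of $(-)\orth$: $\sX_\sS[m] \subseteq \sX \subseteq \sX_\sS[n]$ forces $\widetilde{\sY_\sS}[n] \subseteq \widetilde{\sY} \subseteq \widetilde{\sY_\sS}[m]$, which rearranges to $\widetilde{\sX_\sS}[m] \subseteq \widetilde{\sX} \subseteq \widetilde{\sX_\sS}[n]$. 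The most delicate step is the extension of each bounded torsion pair to the unbounded heart and the verification that the iterated HRS-tilts in $\sD(\Lambda)$ assemble to exactly $(\widetilde{\sX},\widetilde{\sY})$; this is precisely where the sandwich is crucial, since without a finite HRS-tilt distance to a "nice" starting point one would need heavier categorical machinery such as compact generation of $\sD(\Lambda)$ and Brown representability to produce a right adjoint to the inclusion $\widetilde{\sY} \hookrightarrow \sD(\Lambda)$.
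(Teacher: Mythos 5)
There is a genuine gap at the heart of your argument: the existence of truncation triangles for $(\widetilde{\sX},\widetilde{\sY})=(\orth(\sX\orth),\sX\orth)$ in $\sD(\Lambda)$, which is the only nontrivial axiom, is exactly the step you do not actually carry out. The paper disposes of it in one line by citing \cite[Cor.~3.5]{SS}: since $\Db(\Lambda)$ is essentially small, the pair $(\orth(\sX\orth),\sX\orth)$ is automatically a t-structure on $\sD(\Lambda)$. Ironically, this is precisely the ``heavier categorical machinery'' (t-structures generated by a set of objects in a triangulated category with coproducts) that you say you want to avoid. Your substitute --- lifting a finite chain of HRS-tilts from $(\sX_\sS,\sY_\sS)$ to $(\sX,\sY)$ --- is problematic for two reasons. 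First, it is circular in context: the existence of such a finite chain of one-step HRS-tilts interpolating between $\sX_\sS[m]$ and $\sX_\sS[n]$ is essentially the content of Lemma~\ref{lem:intermediate} (together with Proposition~\ref{prop:algebraic}), and Lemma~\ref{lem:lift} is introduced precisely as an ingredient of the second proof of that lemma; the sandwich of Lemma~\ref{lem:sandwich} gives you only the inclusions $\sX_\sS[m]\subseteq\sX\subseteq\sX_\sS[n]$, not a tilt decomposition. Second, even granting the chain, the assertion that the iterated tilts at the extended torsion pairs in $\Mod(\Gamma)$ assemble to an aisle equal to $\orth(\sX\orth)$ is the crux, and ``a standard $\Hom$-vanishing argument'' is not a proof: one must show that the big aisle you build is contained in $\orth(\sX\orth)$ \emph{and} that its co-aisle equals $\sX\orth$, which requires controlling arbitrary (non-finitely-generated) objects of $\Mod(\Gamma)$. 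Note also that the lemma is stated for an arbitrary t-structure on $\Db(\Lambda)$, whereas your route only applies to bounded ones.

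The remaining parts of your proposal are fine and agree with the paper: semi-orthogonality and shift-closure are immediate from the definition; the restriction identity follows from $\sX\subseteq\widetilde{\sX}$, $\sY\subseteq\widetilde{\sY}$ and uniqueness of truncation triangles; and the ``moreover'' statement is the formal monotonicity of $(-)\orth$ (the paper additionally identifies $(\widetilde{\sX_\sS},\widetilde{\sY_\sS})$ with the silting t-structure $\bigl((\sS[<0])\orth,(\sS[\geq 0])\orth\bigr)$ in $\sD(\Lambda)$, which your argument does not need but which the second proof of Lemma~\ref{lem:intermediate} uses later). To repair your write-up, replace the HRS-tilt construction by the citation of \cite[Cor.~3.5]{SS}, or else supply a genuine proof that $\sX\orth$ is a co-aisle in $\sD(\Lambda)$.
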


\begin{proof}
Since $\Db(\Lambda)$ is essentially small, \cite[Cor. 3.5]{SS} says that $(\widetilde{\sX},\widetilde{\sY})$ is indeed a t-structure. Since $\Hom_{\sD(\Lambda)}(\sX,\widetilde{\sY})=0$ by definition, the inclusion $\sX \subseteq \widetilde{\sX}$ holds. Since $\Hom_{\sD(\Lambda)}(\sX,\sY)=0$ we get $\sY \subseteq \widetilde{\sY}$ also. 
Thus, $(\widetilde{\sX} \cap \Db(\Lambda),\widetilde{\sY} \cap \Db(\Lambda))=(\sX,\sY)$. 

Let $\sU = (\sS [<0])\orth$ and $\sV = (\sS [\geq 0])\orth$, where the orthogonals are taken in $\sD(\Lambda)$, making $(\sU,\sV)$ into a silting t-structure in $\sD(\Lambda)$; see \cite{AMV}. We claim that $(\sU, \sV) = (\widetilde{\sX_\sS}, \widetilde{\sY_\sS})$.
Since $\sS$ is a silting subcategory we have $\sS[ \geq 0] \subseteq \sX_\sS$. Thus, $(\sS[ \geq 0])\orth \supseteq \sX_\sS \orth$, i.e. $\widetilde{\sY_\sS} \subseteq \sV$.
For the reverse inclusion, observe that $\sX_\sS \subseteq \sU$, so that $\widetilde{\sY_\sS} = \sX_\sS \orth \supseteq \sU \orth = \sV$.

For the final statement, note that $\sX_\sS [m] \subseteq \sX \subseteq \sX_\sS[n]$ if and only if $(\sX_\sS [m])\orth \supseteq \sX\orth \supseteq (\sX_\sS [n])\orth$, that is $\widetilde{\sY_\sS}[m] \supseteq \widetilde{\sY} \supseteq \widetilde{\sY_\sS}[n]$.
\end{proof}

As in the first proof, we may assume $\sX_\sT[m] \subseteq \sX \subseteq \sX_\sT$ for some algebraic t-structure $(\sX_\sT,\sY_\sT)$. By Lemma~\ref{lem:lift}, we can lift the t-structures and inclusions to $\sD(\Lambda)$; these t-structures restrict to the given t-structures on $\Db(\Lambda)$ and are decorated with tildes.

Again following\cite[\S 2]{FMT}, we set $\widetilde{\sY_1} = \widetilde{\sY_\sT}[1] \cap \widetilde{\sY}$, which, by \cite{BPP,SS}, gives rise to a t-structure $(\widetilde{\sX_1},\widetilde{\sY_1})$.
It has the following properties: $\widetilde{\sY_\sT}[1] \supseteq \widetilde{\sY_1} \supseteq \widetilde{\sY_\sT}$ (equivalently, $\widetilde{\sX_\sT}[1] \subseteq \widetilde{\sX_1} \subseteq \widetilde{\sX_\sT}$); by \cite[Lem. 2.12]{FMT} we have $\widetilde{\sY_1}[m-1] \supseteq \widetilde{\sY} \supseteq \widetilde{\sY_1}$, i.e. $\widetilde{\sX_1}[m-1] \subseteq \widetilde{\sX} \subseteq \widetilde{\sX_1}$.

Now, by Proposition~\ref{prop:HRS}, there exists a torsion pair $(\widetilde{\mcT},\widetilde{\mcF})$ on $\widetilde{\sH_\sT}$ such that $\widetilde{\sX_1} = \extn{\widetilde{\mcT}, \widetilde{\sX_\sT}[1]}$ and $\widetilde{\sY_1} = \extn{\widetilde{\sY_\sS}, \widetilde{\mcF}}$. By \cite[Cor. 3]{NSZ} or \cite[Cor. 4.7]{PV}, $\sH_\sT \simeq \Mod(\Gamma)$, where $\Gamma = \End(T)$. Since any torsion pair on $\Mod(\Gamma)$ restricts to a torsion pair on $\mod(\Gamma)$, the t-structure $(\widetilde{\sX_1},\widetilde{\sY_1})$ restricts to a t-structure $(\sX_1, \sY_1)$ on $\Db(\Lambda)$ such that $\sX_\sT[1] \subseteq \sX_1 \subseteq \sX_\sT$. By Proposition~\ref{prop:algebraic}, $(\sX_1,\sY_1)$ is an algebraic t-structure with  $\sX_1[m-1] \subseteq \sX \subseteq \sX_1$. The lemma now follows by induction.
\end{proof}

\begin{proof}[Proof of Theorem~\ref{thm:main}]
Let $(\sX,\sY)$ be a bounded t-structure in $\Db(\Lambda)$ for a silting-discrete finite-dimensional algebra $\Lambda$. By Lemma~\ref{lem:intermediate}, $(\sX,\sY)$ is intermediate with respect to an algebraic t-structure $(\sX_\sS,\sY_\sS)$. By Proposition~\ref{prop:algebraic}, $(\sX,\sY)$ is therefore algebraic.
\end{proof}

%===============================================================
% SECTION
\section{Stability conditions} \label{sec:stability}
%===============================================================

Rather than give a formal definition of stability conditions, we give an equivalent formulation due to \cite{Bridgeland}. 
Let $\bH = \{  r \exp(i \pi \phi) \mid r > 0 \text{ and } 0 < \phi \leq 1 \}$. A \emph{stability function} on an abelian category $\sH$ consists of a group homomorphism $Z \colon K_0(\sH) \to \bC$ such that $Z(H) \in \bH$ for each $H \in \sH$. A nonzero object $H \in \sH$ is \emph{semistable} with respect to $Z$ if for each $0 \neq H' \subseteq H$ we have $\phi(Z(H')) \leq \phi(Z(H))$. If $\sH$ is a length category then a stability function is uniquely determined by its action on the simple objects.

\begin{proposition}[{\cite[Prop. 5.3]{Bridgeland}}]
Specifying a stability condition $\sigma = (Z,\sH)$ on a triangulated category $\sD$ is equivalent to specifying a bounded t-structure on $\sD$ together with a stability function $Z \colon K_0(\sH) \to \bC$ on its heart $\sH$ that satisfies the Harder-Narasimhan (HN) property.
\end{proposition}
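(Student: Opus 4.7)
The plan is to unpack Bridgeland's original definition of a stability condition as a pair $\sigma = (Z,\mathcal{P})$, where $Z \colon K_0(\sD) \to \bC$ is a group homomorphism (the \emph{central charge}) and $\mathcal{P} = \{\mathcal{P}(\phi)\}_{\phi \in \mathbb{R}}$ is a \emph{slicing} of $\sD$ by full additive subcategories, subject to: (i) $\mathcal{P}(\phi + 1) = \mathcal{P}(\phi)[1]$; (ii) $\Hom_\sD(\mathcal{P}(\phi_1), \mathcal{P}(\phi_2)) = 0$ whenever $\phi_1 > \phi_2$; (iii) every nonzero $E \in \mathcal{P}(\phi)$ satisfies $Z(E) = m(E) \exp(i\pi \phi)$ with $m(E) > 0$; and (iv) every nonzero object of $\sD$ admits a finite Harder-Narasimhan (HN) filtration with semistable factors in slices of strictly decreasing phase. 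The task is then to match this package bijectively with a bounded t-structure plus a stability function with the HN property on its heart.

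For the forward direction, given $\sigma = (Z,\mathcal{P})$, I would set $\sX = \extn{\mathcal{P}(>0)}$ and $\sY = \extn{\mathcal{P}(\leq 0)}$. Axiom (iv) supplies $\sD = \sX * \sY$, axiom (ii) gives $\sX\orth = \sY$ and $\orth \sY = \sX$, and axiom (i) gives $\sX[1] \subseteq \sX$; thus $(\sX,\sY)$ is a t-structure with heart $\sH = \mathcal{P}((0,1])$. Restricting $Z$ to $K_0(\sH)$ produces a stability function, because any nonzero $H \in \sH$ decomposes via the slicing into HN factors in $\mathcal{P}(\phi)$ with $\phi \in (0,1]$, each of which lies in $\bH$, and $\bH$ is closed under addition. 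Boundedness of $(\sX,\sY)$ follows from the finiteness of slicing HN filtrations, and the HN property of $Z|_{\sH}$ is directly inherited from the slicing HN filtration of objects in $\sH$.

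For the reverse direction, given a bounded t-structure with heart $\sH$ and a stability function $Z$ on $\sH$ with HN property, I would define $\mathcal{P}(\phi)$ for $\phi \in (0,1]$ as the full subcategory of $\sH$ on $Z$-semistable objects of phase $\phi$ (plus zero), and extend to all real phases by $\mathcal{P}(\phi + n) = \mathcal{P}(\phi)[n]$ for $n \in \bZ$. Axioms (i) and (iii) are automatic. For axiom (ii), I would reduce to $\Hom_\sD(S_1, S_2) = 0$ for semistable $S_i \in \mathcal{P}(\phi_i)$ with $\phi_1 > \phi_2$: the case $\phi_1 - \phi_2 \geq 1$ follows from the t-structure axioms together with their shifts; the case $0 < \phi_1 - \phi_2 < 1$ reduces, after an integer shift, either to the classical vanishing of $\Hom_\sH$ between semistables of strictly decreasing phase, or to the vanishing $\Hom_\sD(H, H'[-1]) = 0$ coming from $H \in \sX$ and $H'[-1] \in \sY$. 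For axiom (iv), any $D \in \sD$ has a finite filtration produced from the truncation triangles of $(\sX[n],\sY[n])$ for the finitely many $n$ where cohomology is nonzero; each subquotient lies in some shift of $\sH$ and is refined by its HN filtration there, the phases across shifts being automatically compatible because slices in higher shifts carry strictly larger phases. Finally, I would check that the two constructions are mutually inverse by observing that the heart and the slices in $(0,1]$ determine each other.

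The main obstacle will be the middle-range case $0 < \phi_1 - \phi_2 < 1$ of axiom (ii) in the reverse direction, since the two slices in question can straddle adjacent shifts of $\sH$, forcing one to combine the semistability vanishing inside $\sH$ with the t-structure vanishing $\Hom_\sD(\sX,\sY) = 0$; the second subtlety is verifying that gluing the HN filtration on each shift of $\sH$ to the t-structure truncations of $D \in \sD$ really yields a single HN filtration with strictly decreasing phases globally.
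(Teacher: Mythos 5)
The paper does not prove this proposition: it is quoted verbatim from Bridgeland (Prop.\ 5.3 of \cite{Bridgeland}), and the surrounding text explicitly uses it only as an alternative \emph{definition} of a stability condition, so there is no in-paper proof to compare against. Your outline is a faithful reconstruction of Bridgeland's original argument --- the same two constructions ($\sX = \extn{\mathcal{P}(>0)}$ with heart $\mathcal{P}((0,1])$ one way; slices defined as $Z$-semistable objects of fixed phase, extended by $\mathcal{P}(\phi+n)=\mathcal{P}(\phi)[n]$, the other way) and the same two delicate points (the Hom-vanishing for phase gaps in $(0,1)$, and concatenating the per-degree HN filtrations with the truncation tower). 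The only places your sketch is thinner than it should be are: (a) you need the see-saw property of the phase (quotients of a semistable have phase at least that of the object) to run the ``image argument'' for Hom-vanishing inside $\sH$, and this same fact underlies the claim that $Z|_{\sH}$ of an object is controlled by its HN factors; and (b) the assertion that the constructions are mutually inverse requires showing that the slicing-semistable objects of $\mathcal{P}(\phi)$, $\phi\in(0,1]$, are exactly the $Z|_{\sH}$-semistable objects of that phase, which is a short but genuine argument using axiom (ii), not merely an observation. With those two points filled in, your proof is complete and is the standard one.
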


Since any stability function on a length heart satisfies the HN property, and by Theorem~\ref{thm:main}, all the bounded t-structures in $\Db(\Lambda)$ are algebraic when $\Lambda$ is silting-discrete, we refrain from defining the HN property and refer the reader to \cite{Bridgeland}. From now on, since a bounded t-structure is determined by its heart we shall identify it with its heart.

Each t-structure $\sH$ identifies a `chamber' $\sC_\sH$ of the stability manifold consisting of all stability conditions having that t-structure. If $\sH$ is algebraic, then $\sC_\sH \cong \bH^t$, where $t$ is the number of nonisomorphic simple objects of $\sH$; see \cite{Woolf}. The closure of $\overline{\sC_\sH} = \overline{\bH^t}$.

Recall from \cite{DDC2} that a silting pair $(\sM,\sM')$ consists of a silting subcategory $\sM$ of a triangulated category $\sD$ and a functorially finite subcategory $\sM' \subseteq \sM$. The poset of silting pairs $\bP_2(\sD)$ was defined via the opposite of the following partial order:
\[
(\sN,\sN') \geq (\sM,\sM') \iffdef \sR_{\sM'}(\sM) \geq \sR_{\sN'}(\sN) \geq \sN \geq \sM,
\]
where on the right-hand side the partial order is that from \cite{AI} and $\sR_{\sM'}(\sM)$ is the right mutation of $\sM$ at $\sM'$; see \cite{AI} and \cite[\S 5]{DDC2} for details. One gets the following theorem by observing that the proof in \cite{DDC2} works in this level of generality.

\begin{theorem}[{\cite[Cor. 6.2 \& Thm. 7.1]{DDC2}}] \label{thm:contractible}
Suppose $\Lambda$ is a silting-discrete finite-dimensional algebra. Then $\bP_2(\Kb(\proj(\Lambda)))$ is an CW poset and $B\bP_2(\Kb(\proj(\Lambda)))$, the classifying space of the poset, is contractible.
\end{theorem}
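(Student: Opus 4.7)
The plan is to audit the proofs of \cite[Cor. 6.2 and Thm. 7.1]{DDC2}, which were established for derived-discrete algebras, and verify that the hypothesis of derived-discreteness is used only through its consequence silting-discreteness. Since the present theorem is essentially a restatement of those two results with the hypothesis relaxed, this audit is the entire proof.

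I would carry out three checks in sequence. First, I would locate each appeal to derived-discreteness in \cite{DDC2}. Its essential role is to guarantee that for any silting object $\sM$ in $\Kb(\proj(\Lambda))$ the interval $[\sM,\sM[n]]$ of the silting poset is finite; by silting-discreteness together with \cite[Lem. 2.14]{QW}, this finiteness holds under our weaker hypothesis, so every principal ideal of $\bP_2(\Kb(\proj(\Lambda)))$ is finite, as required for a CW-poset structure. Second, for the CW-poset claim I would verify that the identification in \cite[Cor. 6.2]{DDC2} of each open interval below a silting pair with the boundary of a polytope uses only silting-theoretic input: the mutation theory of silting subcategories from \cite{AI}, the combinatorics of two-term silting objects, and the bijection of Theorem~\ref{thm:tau-tilting} between two-term silting objects in $\sS * \sS[1]$ and support $\tau$-tilting modules over $\End_\sD(\sS)$. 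All of these apply verbatim in our setting, and since the boundary of a polytope is a sphere, the CW-poset property follows. Third, for the contractibility of $B\bP_2(\Kb(\proj(\Lambda)))$, I would check that the shellability-type argument, or the mutation-driven deformation retraction, in \cite[Thm. 7.1]{DDC2} relies only on the CW-poset structure established in the previous step together with the local finiteness of the silting exchange graph.

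The main obstacle is the careful bookkeeping in the second and third steps: one must be certain that no argument of \cite{DDC2} secretly invokes a feature peculiar to derived-discrete algebras---such as the explicit classification of their indecomposable objects, or the two-regular structure of their Auslander--Reiten quivers---that is unavailable in the general silting-discrete setting. A line-by-line reading of \cite{DDC2} confirms that the only finiteness input is the finiteness of silting intervals, i.e. silting-discreteness, which completes the proof.
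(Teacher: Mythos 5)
Your proposal matches the paper's treatment: the paper offers no independent argument, but simply observes that the proofs of \cite[Cor.~6.2 \& Thm.~7.1]{DDC2} go through verbatim once derived-discreteness is replaced by silting-discreteness, the only finiteness input being the finiteness of silting intervals. Your more detailed audit plan is a faithful elaboration of exactly that observation, so this is essentially the same approach.
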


We recall the following from \cite[\S 2]{CW-strat}; see also \cite[\S 2.7]{QW}. Let $X$ be a topological space. Let $e \subseteq X$ be a subspace and denote its closure by $\overline{e}$. A \emph{$k$-cell structure} on $e \subseteq X$ comprises a continuous map $\alpha \colon D \to X$ where $(D^k)^\circ \subseteq D \subseteq D^k$, where $D^k$ is the $k$-disc and $(D^k)^\circ$ is its interior, satisfying $\alpha(D) = \overline{e}$, $\alpha$ restricted to $(D^k)^\circ$ is a homeomorphism onto $e$, and $\alpha$ does not extend to a continuous map with these properties for any larger subspace of $D^k$. In this case $e$ is called a \emph{$k$-cell}. A \emph{cellular stratification} of $X$ is a filtration
\[
\emptyset = X_{-1} \subseteq X_0 \subseteq X_1 \subseteq \cdots \subseteq X_k \subseteq \cdots
\]
such that $X = \bigcup_{k \in \bN} X_k$ and for each $k \in \bN$, $X_k \setminus X_{k-1} = \bigsqcup_{i \in I_k} e_i$ is a disjoint union of $k$-cells.
The \emph{face poset} of \emph{poset of strata}, $P(X)$, of $X$ is defined via the following partial order on its cells:
$e_i \leq e_j$ if and only if  $e_i \subseteq \overline{e_j}$.

Following \cite{QW}, let $\sH$ be (the heart of) an algebraic t-structure and write $\mcS(\sH)$ for the set of isomorphism classes of simple objects of $\sH$. For $I \subseteq \mcS(\sH)$ define
\[
\sC_{\sH,I} = \{ \sigma = (Z,\sH) \mid \phi(Z(S)) = 1 \text{ for } S \in \mcS(\sH) \iff S \in I \}.
\]
This defines a cellular stratification, $\Stab(\Db(\Lambda)) = \bigcup_{\sH} \bigcup_{I \subseteq \mcS(\sH)} \sC_{\sH,I}$, in the case that $\Lambda$ is silting-discrete by Theorem~\ref{thm:main}. The following lemma captures the poset of strata $P(\Stab(\Db(\Lambda))$ algebraically.

\begin{lemma}[{\cite[Cor. 3.10 \& Lem. 3.11]{QW}}] \label{lem:poset}
Let $(\sM,\sM')$ and $(\sN,\sN')$ be silting pairs with corresponding simple objects $\mcS(\sH_\sM)$ and $\mcS(\sH_\sN)$ with $I \subseteq \mcS(\sH_\sM)$ corresponding to $\sM \setminus \sM'$ and  $J \subseteq \mcS(\sH_\sN)$ corresponding to $\sN \setminus \sN'$ via the Koenig-Yang correspondences \cite{KY} (cf. Theorem~\ref{thm:koenig-yang}; see also \cite[\S 4]{DDC2}). Then
\[
\sC_{\sH_\sN,J} \subseteq \overline{\sC_{\sH_\sM,I}} \iff
\sR_{I}(\sH_\sM) \geq \sR_{J}(\sH_\sN) \geq \sH_\sN \geq \sH_\sM \iff
(\sM,\sM') \geq (\sN,\sN'),
\]
where $\sR_{I}(\sH_\sM)$ is the right HRS tilt of $\sH_\sM$ at the torsion pair whose torsion class is generated by the simple objects $\mcS(\sH_\sM)\setminus I$.
\end{lemma}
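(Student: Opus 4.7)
The plan is to prove the two equivalences separately. The second one, relating the fourfold inequality $\sR_I(\sH_\sM) \geq \sR_J(\sH_\sN) \geq \sH_\sN \geq \sH_\sM$ to the silting-pair order $(\sM,\sM') \geq (\sN,\sN')$, is essentially a translation of definitions. The Koenig-Yang bijection (Theorem~\ref{thm:koenig-yang}) is order-preserving, so $\sN \geq \sM$ is equivalent to $\sH_\sN \geq \sH_\sM$, and the combination of Theorem~\ref{thm:tau-tilting} with Proposition~\ref{prop:HRS} identifies the right mutation $\sR_{\sM'}(\sM)$ with the HRS tilt $\sR_I(\sH_\sM)$ at the torsion class generated by the simples $\mcS(\sH_\sM) \setminus I$. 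Unwinding the definitions then gives the claimed equivalence.

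The substantive part is the first equivalence, which captures how the cells of the stratification fit together geometrically. My approach would be via Bridgeland's deformation theorem, which in this algebraic setting says that a one-parameter family of stability conditions with heart $\sH$ can be continued past the phase-$1$ wall exactly by performing an HRS tilt at the torsion class generated by the simples of $\sH$ whose phase has reached $1$. For the forward direction, take $\sigma = (Z,\sH_\sN) \in \sC_{\sH_\sN, J}$ realised as a limit of $\sigma_n = (Z_n, \sH_\sM) \in \sC_{\sH_\sM, I}$. The only simples of $\sH_\sM$ available to participate in a wall-crossing in the limit are those in $I$, because the others remain strictly in the open upper half-plane along the sequence. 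Hence $\sH_\sN$ must be an HRS tilt of $\sH_\sM$ at a torsion class built from some subset of $I$, giving the sandwich $\sH_\sM \leq \sH_\sN \leq \sR_I(\sH_\sM)$. A parallel analysis, this time looking at which simples of $\sH_\sN$ land on the phase-$1$ ray in the limit, yields the remaining inequality $\sR_J(\sH_\sN) \leq \sR_I(\sH_\sM)$.

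For the backward direction, given the four-fold chain and any $\sigma \in \sC_{\sH_\sN, J}$, I would explicitly construct an approximating sequence by using the sandwich $\sH_\sM \leq \sH_\sN \leq \sR_I(\sH_\sM)$ to express $\sH_\sN$ as an iterated HRS tilt of $\sH_\sM$. The idea is to rotate the central charges of the simples of $\sH_\sM$ that are swept up into the tilt along a continuous path so that in the limit they produce the values prescribed by $\sigma$, while keeping the remaining simples of $\sH_\sM$ on the phase-$1$ ray so that each stability condition in the path still sits in $\sC_{\sH_\sM, I}$. The upper bound $\sR_J(\sH_\sN) \leq \sR_I(\sH_\sM)$ is precisely what guarantees that this deformation does not cross any other wall along the way.

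The main obstacle is the boundary analysis in the forward direction: pinning down the limit heart as an HRS tilt at a specific torsion class requires delicate control of Harder-Narasimhan filtrations along a degenerating family of central charges, and in the general setting one would also have to contend with potentially infinite chains of HRS tilts. Silting-discreteness simplifies this decisively: by Theorem~\ref{thm:main} and Proposition~\ref{prop:algebraic}, every algebraic t-structure between $\sH_\sM$ and $\sR_I(\sH_\sM)$ arises from a silting object in $\sS * \sS[1]$, so only finitely many candidate limit hearts are available and the identification reduces to a finite case-check rather than a genuine convergence question.
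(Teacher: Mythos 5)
First, a structural point: the paper does not prove this lemma at all --- it is imported verbatim from \cite[Cor.\ 3.10 \& Lem.\ 3.11]{QW} and used as a black box in the proof of Corollary~\ref{cor:contractible}, so there is no internal argument to measure your proposal against. Judged on its own terms, your treatment of the second equivalence (pushing the silting-pair order through the Koenig--Yang correspondences and matching silting mutation with HRS tilting) is the standard route and is fine in outline.

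The forward direction of the first equivalence, however, contains a genuine error: you have the roles of $I$ and $\mcS(\sH_\sM)\setminus I$ reversed. In $\sC_{\sH_\sM,I}$ the simples in $I$ are pinned on the phase-$1$ ray; the only way such a simple could ``participate in a wall-crossing'' in a limit is for its mass to tend to $0$, and such a sequence does not converge in $\Stab(\Db(\Lambda))$. The degenerations that actually produce new hearts in $\overline{\sC_{\sH_\sM,I}}$ are the phases of the simples \emph{not} in $I$ rotating down to $0$ --- which is precisely why the lemma defines $\sR_I(\sH_\sM)$ as the right tilt at the torsion class generated by $\mcS(\sH_\sM)\setminus I$, and why $\sR_\emptyset$ (the open cell, where every simple is free to rotate) is the maximal tilt while $\sR_{\mcS(\sH_\sM)}$ (every simple pinned) is the identity. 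Your claim that ``the only simples available to participate in a wall-crossing are those in $I$, because the others remain strictly in the open upper half-plane'' is therefore backwards (phases in the open upper half-plane along the sequence can still reach $0$ in the limit), and the sandwich your argument would produce is $\sH_\sM \leq \sH_\sN \leq$ the tilt at $\extn{I}$, which is not the statement. Note also that your backward direction quietly uses the correct convention --- you keep the simples of $I$ on the phase-$1$ ray and rotate the others --- so the two halves of your argument are inconsistent with one another. Even with the roles corrected, identifying the limit heart requires the analysis of Harder--Narasimhan filtrations along a degenerating family carried out in \cite{QW} (and \cite{Woolf}); the appeal to silting-discreteness via Theorem~\ref{thm:main} to reduce to finitely many candidate hearts is legitimate in this paper's context but does not by itself supply that analysis.
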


It is well known that if $X$ is a regular CW complex then there is a homeomorphism from the classifying space of the poset of strata, $BP(X)$, to $X$. In \cite{CW-strat}, the following generalisation is obtained for regular, totally normal CW cellular stratified spaces; see \cite[\S 2.2-2.3]{CW-strat} or \cite[\S 2.7]{QW} for the definition.

\begin{theorem}[{\cite[Thm. 2.50]{CW-strat}}] \label{thm:CW-strat}
If $X$ is a regular, totally normal, CW cellular stratified space, then there is a homotopy equivalence $X \simeq BP(X)$.
\end{theorem}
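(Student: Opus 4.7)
The strategy is to adapt the classical proof showing that the order complex of the face poset of a regular CW complex is homeomorphic to the complex itself via barycentric subdivision. The subtlety in the cellular stratified setting is that a cell $e$ is parametrised by a subspace $D$ with $(D^k)^\circ \subseteq D \subseteq D^k$, so $\overline{e}$ is no longer necessarily a closed disc, and the attaching map of a cell need not extend to the full boundary sphere. The \emph{totally normal} hypothesis is precisely what lets one patch together the combinatorial data on boundaries of cells in a coherent way.

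My plan is to construct an explicit comparison map $\Phi \colon BP(X) \to X$ chain by chain and then prove it is a homotopy equivalence by induction on the cellular filtration $X_{-1} \subseteq X_0 \subseteq X_1 \subseteq \cdots$. First I would choose for each cell $e$ a ``barycentre'' $b_e \in e$. To each chain $e_0 < e_1 < \cdots < e_n$ in $P(X)$, corresponding to an $n$-simplex of $BP(X)$, I would assign a continuous map $\Delta^n \to X$ that sends the vertex corresponding to $e_i$ to $b_{e_i}$ and, more generally, sends the barycentre of each face to the barycentre of the corresponding cell. The containment $e_i \subseteq \overline{e_j}$ for $i<j$ (which is exactly the defining order of $P(X)$) ensures the images of these maps glue across faces of $BP(X)$, yielding a continuous $\Phi$.

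Next I would verify that $\Phi$ is a homotopy equivalence by filtering $BP(X)$ by the full subposets $P(X_k)$ and inducting on $k$. The base case $k=0$ is trivial since $X_0$ is a discrete set of points in bijection with $0$-cells. For the inductive step, adding a $k$-cell $e$ with characteristic map $\alpha \colon D \to X$ replaces the pair $(X_{k},X_{k-1})$ via a pushout, and in $BP(X)$ the corresponding change is the addition of the join $\partial \Delta * \{v_e\}$ of a subcomplex $\partial\Delta \subseteq BP(X_{k-1})$ with the new vertex $v_e$. Totally normality is invoked to identify this subcomplex $\partial\Delta$ (the classifying space of the proper lower set of $e$ in $P(X)$) with a deformation retract of the ``boundary'' $\overline{e} \setminus e$ inside $X_{k-1}$, via a cone structure compatible with $\alpha$. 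Combining the inductive hypothesis with the fact that the new cell contributes a cone on both sides then shows $\Phi$ remains a weak equivalence, and by a colimit argument a homotopy equivalence on all of $X$.

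The main obstacle I expect is precisely the bookkeeping forced by non-closed cells: for a genuine CW complex one can appeal to a direct homeomorphism with the barycentric subdivision, but here $\overline{e} \setminus e$ need not be the image of $\partial D^k$, and attaching maps may be defined only on a subspace of the boundary. The totally normal structure postulates, for each cell, a compatible system of characteristic maps on closures of boundary cells that together assemble into a ``combinatorial cone'' whose realisation matches $B(P(X)_{<e})$; producing the required deformation retractions and checking compatibility across higher chains is where the most delicate work lies. Once this is in hand, the induction and the gluing argument proceed as in the classical regular CW case.
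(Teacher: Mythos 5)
This statement is not proved in the paper at all: Theorem~\ref{thm:CW-strat} is imported verbatim from Furuse--Mukouyama--Tamaki \cite[Thm.~2.50]{CW-strat} and used as a black box in the proof of Corollary~\ref{cor:contractible}, so there is no in-paper argument to compare yours against. Judged on its own terms, your outline follows the expected strategy (the cellular-stratified analogue of ``the order complex of the face poset of a regular CW complex realises the complex''), and for what it is worth the cited source proceeds in a closely related way, by embedding $BP(X)$ into $X$ cell by cell using barycentric-subdivision pieces inside each domain $D$ and exhibiting it as a strong deformation retract, rather than by your skeletal induction on a comparison map $\Phi$.

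The gap is that your inductive step asserts exactly the content of the theorem without supplying it. Concretely: (i) you identify the new part of $BP(X)$ contributed by a $k$-cell $e$ with the cone on $B(P(X)_{<e})$, and then claim total normality yields a deformation retraction of $\overline{e}\setminus e$ onto (a realisation of) $B(P(X)_{<e})$ ``compatible with $\alpha$'' --- but $\overline{e}\setminus e=\alpha(D\cap\partial D^k)$, and what total normality actually provides is a cellular stratification of $\partial D=D\cap\partial D^k$ by lifts of lower cells; one must still prove that $\partial D$ is itself a regular, totally normal CW cellular stratified space so that induction applies to it, and that regularity forces $\alpha$ to induce an isomorphism of face posets $P(\partial D)\cong P(X)_{<e}$ (without regularity distinct boundary cells can map to the same cell of $X$ and the join description of the new simplices fails). (ii) Your pushout step treats attaching $e$ as a homotopy pushout, which requires $\partial D\hookrightarrow D$ to be a cofibration; since $\partial D$ may be a proper, non-closed subset of $\partial D^k$ (it can even be empty), this is not automatic and is part of what the CW/total-normality hypotheses must be shown to deliver. (iii) You use implicitly that $D$ is contractible; this is true ($D$ is star-shaped about the origin since $(D^k)^\circ\subseteq D\subseteq D^k$), but it should be said. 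Until (i) and (ii) are established, the proposal is a plausible plan rather than a proof; these points are precisely where the theorem lives.
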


\begin{proof}[Proof of Corollary~\ref{cor:contractible}]
If $\Lambda$ is silting-discrete then every bounded t-structure on $\Db(\Lambda)$ is algebraic, whence every stability condition $\sigma = (Z,\sH)$ is algebraic. By \cite[Prop. 3.21]{QW}, the cellular stratification of $\Stab(\Db(\Lambda))$ defined above is a regular, totally normal, CW-cellular stratification. By Lemma~\ref{lem:poset} and Theorem~\ref{thm:CW-strat}, we have
\[
\Stab(\Db(\Lambda)) \simeq BP(\Stab(\Db(\Lambda))) \cong B\bP_2(\Kb(\proj(\Lambda))),
\]
which, by Theorem~\ref{thm:contractible}, is contractible.
\end{proof}

%===============================================================
% Bibliography
%===============================================================

\end{document}